\numberwithin{equation}{section}
\numberwithin{table}{section}
\DeclareMathOperator{\Aut}{Aut}
\DeclareMathOperator{\rank}{rank}
\DeclareMathOperator{\Mat}{Mat}
\DeclareMathOperator{\End}{End}
\DeclareMathOperator{\ev}{ev}
\DeclareMathOperator{\Nr}{Nr}
\DeclareMathOperator{\ord}{ord}
\DeclareMathOperator{\Hom}{Hom}
\DeclareMathOperator{\Cent}{Cent}
\theoremstyle{definition}
\newtheorem{definition}{Definition}[section]
\newtheorem{example}[definition]{Example}
\newtheorem{remark}[definition]{Remark}
\newtheorem*{remark*}{Remark}
\theoremstyle{plain}
\newtheorem{theorem}[definition]{Theorem}
\newtheorem{corollary}[definition]{Corollary}
\newtheorem{lemma}[definition]{Lemma}
\newtheorem{proposition}[definition]{Proposition}
\newcommand{\vF}{{ \mathbb F }}
\newcommand{\N}{{ \mathbb N }}
\newcommand{\vN}{{ \mathbb N }}
\newcommand{\vR}{{ \mathcal R }}
\renewcommand{\epsilon}{\varepsilon}
\title{On the Characteristic Polynomial of Linearized Polynomials}
\author[L. Bastioni]{Luca Bastioni}
\address{University of South Florida \\ 4202 E Fowler Ave\\
33620 Tampa, US.}
\email{lbastioni@usf.edu}
\author[G. Micheli]{Giacomo Micheli}
\address{University of South Florida \\ 4202 E Fowler Ave\\
33620 Tampa, US.}
\email{gmicheli@usf.edu}
\author[S. Zhao]{Shujun Zhao}
\address{University of South Florida \\ 4202 E Fowler Ave\\
33620 Tampa, US.}
\email{shujunz@usf.edu}
\keywords{Characteristic Polynomial, Linear maps, $q$-polynomials}
\subjclass[2020]{11B83, 11Y16, 11T06, 68Q25}
\begin{document}

\begin{abstract}
Let $k$ be a finite field, and $L$ be a $q$-linearized polynomial defined over $k$ of $q$-degree $r$ ($L=\sum^r_{i=0}a_iZ^{q^i}$, with $a_i\in k$).
This paper provides an algorithm to compute a characteristic polynomial of $L$  over a large extension field $\vF_{q^n}\supseteq k$. Our algorithm has computational complexity of $O(n(\log(n))^4)$ in terms of $\vF_q$ operations with the implied constant depending only on $k$ and $r$. Up to logarithmic factors, and for linear maps represented by low degree polynomials, this provides a square root improvement over generic algorithms.
\end{abstract}

\maketitle

\section{Introduction}

\subsection{Characteristic Polynomial of a Matrix}
Let $A\in \Mat_n(K)$, where $K$ is a field. Computing the characteristic polynomial of $A$ is a classical problem. In 1985, Keller-Gehrig \cite{keller1985fast} reduced this problem to a matrix multiplication problem, obtaining an algorithm with complexity $O(n^\omega\log(n))$ in general, and $O(n^\omega)$ when $A$ is generic matrix: each of whose coefficients can be considered as an independent indeterminate. Here, $\omega$ denotes the exponent of the optimal time complexity of matrix multiplication, for which it's known that $2\leq \omega \leq 3$, with the current best bound being $\omega < 2.371866$ \cite{duan2023faster}. In 2007, C. Pernet and A. Storjohann \cite{pernet2007faster} showed that the characteristic polynomial can be computed with expected cost $O(n^\omega)$, improving by a factor of $\log(n)$ Keller-Gehrig's bound, with a success probability of at least 1/2, provided the field $F$ contains at least $2n^2$ elements. More recently, in 2021, V. Neiger and C. Pernet \cite{neiger2021deterministic} removed both the randomness and field size restriction, presenting a deterministic algorithm that achieves complexity $O(n^\omega)$ for computing the characteristic polynomial of any matrix over an arbitrary field $K$.

\subsection{Characteristic Polynomial of Endomorphisms of Drinfeld Modules}
Let $A=\vF_q[T]$ be the ring of polynomials. Let $\phi$ be a Drinfeld module of rank $r$ over $k=\vF_{q^n}$ with $A$-characteristic $\mathfrak{p}$ of degree $d$ (See subsection \ref{subsec:drinfeldintro}). In \cite{musleh2023computing}, the authors Y. Musleh and É. Schost developed an algorithm to compute the characteristic polynomial of an arbitrary endomorphism $u\in\End_k(\phi)$ of a finite Drinfeld module using its associated crystalline cohomology. The algorithm is faster when $u=\tau^n$ is the Frobenius endomorphism and $d=n$. In this case, the algorithm attains a bit complexity of
\begin{equation}\label{eq:primecase}
(r^{\omega} n^{1.5} \log q + n \log^2 q)^{1 + o(1)},
\end{equation}
where $\omega$ is the exponent of the optimal time complexity of matrix multiplication as mentioned before. For $u=\tau^n$ and $d<n$, the characteristic polynomial could be computed with bit complexity 
\begin{equation}\label{eq:generalcase}
(r^{\lambda}/d + r^{\omega}/\sqrt{d}) n^2 \log q + n \log^2 q)^{1 + o(1)},
\end{equation}
where $\lambda$ denotes an exponent such that the characteristic polynomial of an $s\times s$ matrix over a ring $R$ can be computed in $O(s^\lambda)$. When $R$ is a field this can be done with the same cost of matrix multiplication and so $\lambda=\omega$. For general rings, the best known value as of today is $\lambda\approx 2.7$ (see \cite{kaltofen2005complexity}). When $r$ and $q$ is fixed, \ref{eq:primecase} is essentially linear in $n^{1.5}$ and \ref{eq:generalcase} is linear in $n^2$.

Another important contribution is due to X. Caruso and A. Leudière \cite{caruso2025algorithms}, who proposed several algorithms to compute characteristic polynomials of endomorphisms and norms of isogenies of Drinfeld modules. Among these, their most efficient algorithm for computing the characteristic polynomial of the Frobenius endomorphism achieves a cost of $O((n^2r+nr^\omega)\log q)$ operations in $k$.

\subsection{A square root improvement to the computation of the characteristic polynomial of a low degree linearized polynomial}
Let $k$ be a finite field, and $L$ be a $q$-linearized polynomial defined over $k$ of $q$-degree $r$ ($L=\sum^r_{i=0}a_iZ^{q^i}$, with $a_i\in k$).
This paper provides an algorithm to compute a characteristic polynomial of $L$  over a large extension field $\vF_{q^n}\supseteq k$. Our algorithm has computational complexity of $O(n(\log(n))^4)$ in terms of $\vF_q$ operations with the implied constant depending only on $k$ and $r$. This is essentially better than a square root improvement (up to logarithmic factors) of the  state of art: as mentioned before, if one were to compute the characteristic polynomial of $L$ as an $\vF_q$ linear map over $\vF_{q^n}$ one would have a complexity of $O(n^\omega)$, where $\omega\in [2,3]$. In other words, in this paper we show that if a linear map comes from a $q$-polynomial of low degree, then its characteristic polynomial computation is much faster than the one of linear maps having a large degree $q$-polynomial representing it.

\section{Preliminaries}

\subsection{Drinfeld Modules}\label{subsec:drinfeldintro}
In this subsection, we provide some background on the theory of Drinfeld modules. For a comprehensive treatise of the subject, the reader should refer to \cite{Papikian2023Drinfeld}. Throughout the article, $q$ is a prime power and $\vF_q$ the finite field with $q$ elements. We denote by $A=\vF_q[T]$ the ring of polynomials over $\vF_q$ and by $\vF_q(T)$ the rational function field in the variable $T$. Let $k\supseteq \vF_q$ be a field and $a\in A$.

\begin{definition}
Let $n\geq 0$. A polynomial $f(Z)=\sum_{i=0}^na_iZ^{q^i}$ in $k[Z]$ is called a $q$-\emph{linearized} polynomial. If $\operatorname{deg}(f)=q^n$, we say that $f$ has $q$-degree equal to $n$ and we denote $q^i$ by $[i]$. The ring of $q$-linearized polynomials is denoted by $k\langle Z \rangle$, and in general, it is a noncommutative ring under addition and composition.      
\end{definition}

\begin{definition}
Let $k$ be a commutative $\vF_q$-algebra and $\tau$ be an indeterminate.  Let $k\{\tau\}$ be the set of polynomials of the form $\sum_{i=0}^na_i\tau^i$ such that $a_i\in k$ and $n\geq 0$. In $k\{\tau\}$, for $\{a_i,b_i\}_{i=0,\ldots,n}\in k$, we define the addition as the usual addition of polynomials
\[\sum_{i=0}^na_i\tau^i+\sum_{i=0}^nb_i\tau^i=\sum_{i=0}^n(a_i+b_i)\tau^i.
\]
The multiplication is instead defined by the following rule: for any $a,b\in k$
\begin{equation}\label{twistedmultiplication}
(a\tau^i)(b\tau^j):=ab^{q^i}\tau^{i+j}.    
\end{equation}
With these operations $k\{\tau\}$ is a non-commutative ring called the ring of \textit{twisted} polynomials. 
\end{definition}

Notice that $k\{\tau\}$ is related to the ring of $q$-linearized polynomials $k\langle Z \rangle$ via the following isomorphism: 
\begin{align*}
\iota: k\{\tau \}&\longrightarrow k\langle Z \rangle\\ 
a_i\tau^i&\longmapsto a_iZ^{q^i}.
\end{align*}
With a little abuse of notation, for $f\in k\{\tau\}$ we write $f(Z)$ instead of $\iota(f)(Z)$.

\begin{definition}\label{drinfeldmodule}
Let $k$ be an $A$-field, (i.e. a field equipped with an $\vF_q$-algebra homomorphism $\gamma:\vF_q[T]\rightarrow k$).
A Drinfeld module of rank $r\geq 1$ is an $\vF_q$-algebra homomorphism 
\begin{align*}
\phi:\vF_q[T]\longrightarrow & k\{\tau\}\\
a\longmapsto &\phi_a=\gamma(a)+\sum^n_{i=1}h_i(a)\tau^i
\end{align*}
where $h_n(a)\neq 0$ and $n=r\cdot\deg_T(a)$. The morphism $\gamma$ is called the \emph{structure morphism} of $\phi$, and $\mathfrak{p}\coloneqq\ker(\gamma)$ is called the $A$-\emph{characteristic} of $k$. Say that $d=\deg \mathfrak{p}$.
\end{definition}
Observe that for any  $a=\sum_{i=0}^na_iT^i$, we have $\phi_a=\sum_{i=0}^na_i\phi_T^i$. In other words, $\phi_T$ determines $\phi$ entirely. Also notice that via $\phi$, $k$ acquires a twisted $A$-module structure simply defining $a\circ\beta \coloneqq \phi_a(\beta)$ for every $a\in A$ and $\beta\in k$. We denote such $A$-module by $^\phi k$.

\begin{definition}
Let $\phi, \psi$ be Drinfeld modules defined over $k$. A \emph{morphism} $u: \phi \to \psi$ is a twisted polynomial $u \in k\{\tau\}$ such that $u \phi_a = \psi_a u$ for all $a \in A$. The group of all morphisms $\phi \to \psi$ over $k$ is denoted by $\Hom_k(\phi, \psi)$. We also define 
\[
\End_k(\phi)\coloneqq \Hom_k(\phi,\phi)=\Cent_{k\{\tau\}}(\phi(A)).\]
The set $\End_k(\phi)$ admits a ring structure via sum and product of morphisms, and it is called the \emph{endomophism ring} of $\phi$. When $k=\vF_{q^n}$, then $\tau^n\in\End_k(\phi)$ is called the \emph{Frobenius} endomorphism of $\phi$.
\end{definition}

The following proposition is a standard fact, we include the proof for completeness.

\begin{proposition}\label{theo:integral}
Let $\phi$ be a Drinfeld module of rank $r$ over $k$. Let $u\in\End_k(\phi)$. Then $\vF_q[T,u]$ is an integral extension of $\vF_q[T]$.
\end{proposition}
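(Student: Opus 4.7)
The plan is to realize $k\{\tau\}$ as a finitely generated right module over $\phi(A)$ and apply the Cayley--Hamilton theorem to left multiplication by $u$. I would start by endowing $M := k\{\tau\}$ with the right $\phi(A)$-action $m \cdot a := m\phi_a$, which is well defined because $\phi(A)$ is a commutative subring of $k\{\tau\}$. For any $u \in k\{\tau\}$, left multiplication $L_u : M \to M$, $L_u(m) = um$, is automatically right-$\phi(A)$-linear by associativity, so $L_u \in \End_{\phi(A)}(M)$.

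The key structural input is freeness: assuming $k$ is a finite extension of $\vF_q$ of degree $n$ (the setting relevant to the paper), the right-Euclidean division algorithm in $k\{\tau\}$ by $\phi_T$ works because its leading $\tau$-coefficient $h_r(T)$ lies in $k^\times$. Iterating gives each $L \in k\{\tau\}$ a unique expansion $L = \sum_i R_i \phi_T^i$ with the $\tau$-degree of $R_i$ strictly less than $r$. Choosing an $\vF_q$-basis $\{f_1,\dots,f_n\}$ of $k$ then identifies $\{f_s \tau^j : 1 \leq s \leq n,\ 0 \leq j < r\}$ as a free basis of $M$ over $\phi(A)$, so $M \cong \phi(A)^{nr}$.

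With $M$ free of rank $N := nr$ over the commutative ring $\phi(A)$, Cayley--Hamilton applied to $L_u$ yields a monic polynomial $p(X) = X^N + \phi_{a_{N-1}} X^{N-1} + \cdots + \phi_{a_0}$ with $a_i \in A$ such that $p(L_u) = 0$ in $\End_{\phi(A)}(M)$. Evaluating this identity at $1 \in M$ and unwinding the right $\phi(A)$-action produces
\[
u^N + u^{N-1}\phi_{a_{N-1}} + \cdots + u\,\phi_{a_1} + \phi_{a_0} = 0
\]
inside $k\{\tau\}$. Now the hypothesis $u \in \End_k(\phi)$ becomes essential: the commutativity $u\phi_a = \phi_a u$ for every $a \in A$ allows each $\phi_{a_i}$ to be pushed to the left of $u^{N-i}$, yielding the monic relation
\[
u^N + \phi_{a_{N-1}} u^{N-1} + \cdots + \phi_{a_0} = 0,
\]
which exhibits $u$ as integral over $\phi(A) \cong \vF_q[T]$.

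The main subtlety I expect is exactly this last bookkeeping. Cayley--Hamilton is available for $L_u$ regardless of whether $u$ is an endomorphism of $\phi$, but the resulting identity is twisted: the coefficients $\phi_{a_i}$ coming from the scalar action of $\phi(A)$ sit on the \emph{right} of the powers of $u$. What converts this into a genuine integrality statement is the commutation $u\phi_a = \phi_a u$, which is precisely the defining condition of $u \in \End_k(\phi)$. Without this hypothesis the conclusion fails in general; for instance, $\tau \in k\{\tau\}$ itself is typically not integral over $\phi(A)$.
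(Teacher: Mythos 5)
Your proof is correct, and it takes a genuinely different route from the paper's. The paper outsources all the finiteness to the structure theorem for endomorphism rings --- $\End_k(\phi)$ is free of finite rank over $A=\vF_q[T]$ (cited from Papikian) --- and then runs the standard ascending-chain argument on the submodules $\langle 1,u,\dots,u^i\rangle_{A}$. You instead make $k\{\tau\}$ itself the finite module: right division by $\phi_T$ (valid because its top $\tau$-coefficient is a unit of $k$) exhibits $k\{\tau\}$ as a free right $\phi(A)$-module of rank $N=[k:\vF_q]\,r$, and Cayley--Hamilton applied to left multiplication by $u$, evaluated at $1$, produces a monic relation of degree $N$; the commutation $u\phi_a=\phi_a u$ then lets you read it as an integral equation over $\vF_q[T]$ (and is also what makes $\vF_q[T,u]$ commutative, so that ``integral extension'' means what it should). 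What your approach buys: it is essentially self-contained --- it needs only the division algorithm in $k\{\tau\}$, not the nontrivial finite generation of $\End_k(\phi)$ --- and it yields an explicit degree bound $[k:\vF_q]\,r$ for the integral dependence. What it costs: it genuinely requires $[k:\vF_q]<\infty$, since otherwise $k\{\tau\}$ has infinite rank over $\phi(A)$ and Cayley--Hamilton is unavailable, whereas the paper's argument works for an arbitrary $A$-field $k$; since the paper only ever invokes the proposition over finite fields, this restriction (which you flag) is harmless here. One cosmetic caveat: your closing remark that $\tau$ is ``not integral over $\phi(A)$'' is only heuristic, as integrality over a non-central subring of a noncommutative ring is not a standard notion --- the honest statement is that for a general $u\in k\{\tau\}$ the Cayley--Hamilton identity has its coefficients on the wrong side and $\vF_q[T,u]$ need not even be commutative.
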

\begin{proof}
It is enough to prove that $u$ is integral over $\vF_q[T]$. To see this, observe that $\End_k(\phi)$ is a free module of finite rank over $\vF_q[T]$ (see \cite[Theorem 3.4.1]{Papikian2023Drinfeld}), and therefore it is Noetherian, since $\vF_q[T]$ is. Now, consider the sequence of $\vF_q[T]$-submodules $M_i=\langle 1,u,\ldots,u^i \rangle_{\vF_q[T]}$. This stabilizes at some $j$, giving $M_j=M_{j-1}$, which forces $u^j\in\langle 1,u,\ldots,u^{j-1} \rangle_{\vF_q[T]}$, proving the integrality of $u$.
\end{proof}

\begin{definition}
An element $\alpha \in \ker( \iota(\phi_a)) \subseteq \overline{k}$ is called an \emph{$a$-torsion point of $\phi$}. The set of all $a$-torsion points of $\phi$ is denoted by $\phi[a]$.
\end{definition}

Let $\mathfrak{l}\neq\mathfrak{p}$ be a prime of $A$, $m\geq 1$ be an integer and consider $\phi[\mathfrak{l}^m]$. Then the action by $\phi_\mathfrak{l}$ gives natural surjective homomorphisms
\[
\phi[\mathfrak{l}^{m+1}] \xrightarrow{\phi_\mathfrak{l}} \phi[\mathfrak{l}^m], \quad  \alpha\mapsto\phi_\mathfrak{l}(\alpha).
\]
\begin{definition}
The inverse limit
\[T_\mathfrak{l}(\phi) \coloneqq \mathop{\lim_{\longleftarrow}}_{m}\phi[\mathfrak{l}^m]
\]
with respect to the family $(\phi[\mathfrak{l^m}],\phi_\mathfrak{l})_{m\geq 1}$, is called the \emph{$\mathfrak{l}$-adic Tate module} of $\phi$.  
\end{definition}

Let $A_\mathfrak{l}$ be the completion of $A$ at $\mathfrak{l}$. Then, because of \cite[Theorem 3.5.2]{Papikian2023Drinfeld}, $T_\mathfrak{l}(\phi)$ is a free $A_\mathfrak{l}$-module of rank $r$, i.e. $T_\mathfrak{l}(\phi)\cong A_\mathfrak{l}^{\oplus r}$.
Now, let $u\in \End_k(\phi)$ and $\alpha\in\phi[\mathfrak{l^m}]$, then $\phi_{\mathfrak{l^m}}u(\alpha) = u\phi_{\mathfrak{l^m}}(\alpha)=0$. This means that $u$ induces an endomorphism on $\phi[\mathfrak{l^m}]$ for all $m\geq 1$, and hence an $A_\mathfrak{l}$-linear map
$u_\mathfrak{l}:T_\mathfrak{l}(\phi) \rightarrow T_\mathfrak{l}(\phi)$.

\begin{theorem}
Let $\phi$ be a Drinfeld module over $k$, and let $\mathfrak{l} \neq \mathfrak{p}$ be a prime of $A$. The natural map
\[
\End_k(\phi)\otimes_A A_\mathfrak{l} \rightarrow \End_{A_\mathfrak{l}}(T_\mathfrak{l}(\phi)), \qquad u\mapsto u_\mathfrak{l}
\]
is injective. Moreover the cokernel of this homomorphism is torsion-free.
\end{theorem}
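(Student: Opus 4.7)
The plan is to deduce both assertions from a single \emph{saturation} statement,
\[
\End_k(\phi)\cap\mathfrak{l}^m\End_{A_\mathfrak{l}}(T_\mathfrak{l}(\phi))=\mathfrak{l}^m\End_k(\phi)\qquad\text{for every }m\geq 1,
\]
combined with the easier injectivity of the un-tensored map $\End_k(\phi)\hookrightarrow\End_{A_\mathfrak{l}}(T_\mathfrak{l}(\phi))$. The latter is immediate: if $u\in\End_k(\phi)$ acts as zero on $T_\mathfrak{l}(\phi)$, then $u$ annihilates $\phi[\mathfrak{l}^m]$ for every $m$, so the linearized polynomial $\iota(u)\in k\langle Z\rangle$ has infinitely many roots in $\overline{k}$ and hence vanishes.

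Writing $\mathfrak{l}=(\ell)$ (possible since $A$ is a PID), the $m=1$ case of the saturation implies the rest by induction, using that $\End_{A_\mathfrak{l}}(T_\mathfrak{l}(\phi))$ is torsion-free over the DVR $A_\mathfrak{l}$. The crux is therefore: if $u\in\End_k(\phi)$ and $u_\mathfrak{l}\in\ell\End_{A_\mathfrak{l}}(T_\mathfrak{l}(\phi))$, then $u\in\ell\End_k(\phi)$. The hypothesis says $\phi[\ell]\subseteq\ker u$. Since $\mathfrak{l}\neq\mathfrak{p}$, the derivative $\phi_\ell'(Z)=\gamma(\ell)$ is nonzero, so $\phi_\ell$ is separable with $q^{r\deg\ell}$ roots in $\overline{k}$. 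Right Euclidean division in $k\{\tau\}$ produces $u=w\phi_\ell+s$ with $\deg_\tau(s)<r\deg\ell$; evaluating at points of $\phi[\ell]$ shows that the ordinary-polynomial avatar of $s$ vanishes on a set strictly larger than its degree, whence $s=0$. Finally, $u=w\phi_\ell$ together with $u\phi_a=\phi_au$ for every $a\in A$ yields $(w\phi_a-\phi_aw)\phi_\ell=0$; since $k\{\tau\}$ is a (non-commutative) domain, $w\in\End_k(\phi)$, and so $u=\ell\cdot w\in\ell\End_k(\phi)$.

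With saturation in hand, a standard $\mathfrak{l}$-adic approximation argument finishes the proof. Using that $\End_k(\phi)$ is a free $A$-module of finite rank (the fact already invoked in Proposition~\ref{theo:integral}), pick an $A$-basis $e_1,\dots,e_s$ and write any element of $\End_k(\phi)\otimes_A A_\mathfrak{l}$ uniquely as $\sum_i c_ie_i$ with $c_i\in A_\mathfrak{l}$. If $\sum_i c_i(e_i)_\mathfrak{l}=0$, approximate $c_i\equiv a_i^{(m)}\pmod{\mathfrak{l}^m}$ with $a_i^{(m)}\in A$; then $\sum_i a_i^{(m)}e_i\in\End_k(\phi)\cap\mathfrak{l}^m\End_{A_\mathfrak{l}}(T_\mathfrak{l}(\phi))=\mathfrak{l}^m\End_k(\phi)$, forcing $a_i^{(m)}\in\mathfrak{l}^m$ for each $i$, and hence $c_i=0$. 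The same scheme handles the cokernel: if $\ell n=\sum_i c_i(e_i)_\mathfrak{l}$ for some $n\in\End_{A_\mathfrak{l}}(T_\mathfrak{l}(\phi))$, the saturation at $m=1$ forces $c_i\in\ell A_\mathfrak{l}$, so $n=\sum_i(c_i/\ell)(e_i)_\mathfrak{l}$ already lies in the image of the map.

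The main obstacle is the saturation at $m=1$; everything else is formal. That step crucially combines three ingredients simultaneously: the separability of $\phi_\ell$ (supplied by $\mathfrak{l}\neq\mathfrak{p}$), the right Euclidean structure of $k\{\tau\}$ (needed to produce $w\in k\{\tau\}$ from the divisibility $\phi_\ell(Z)\mid u(Z)$), and the fact that $k\{\tau\}$ is a domain (needed to cancel the factor $\phi_\ell$ and promote $w$ from $k\{\tau\}$ back into $\End_k(\phi)$).
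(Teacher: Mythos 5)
Your proof is correct: the paper itself does not prove this theorem but defers to \cite[Theorem 3.5.4]{Papikian2023Drinfeld}, and your argument --- reduce everything to the saturation statement $u_\mathfrak{l}\in\mathfrak{l}\End_{A_\mathfrak{l}}(T_\mathfrak{l}(\phi))\Rightarrow u\in\mathfrak{l}\End_k(\phi)$, proved by right division by the separable $\phi_\ell$ in the domain $k\{\tau\}$, and then an $\mathfrak{l}$-adic approximation on an $A$-basis of the free module $\End_k(\phi)$ --- is exactly the standard Tate-style proof given there. The only compressed spot is the cokernel step, where the coefficients $c_i$ lie in $A_\mathfrak{l}$ rather than $A$, so one must again approximate them modulo $\mathfrak{l}$ by elements of $A$ before invoking saturation at $m=1$; as you indicate, this is the same scheme and goes through without difficulty.
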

\begin{proof}
    See \cite[Theorem 3.5.4]{Papikian2023Drinfeld}.
\end{proof}

Because of the last theorem, we can associate with every endomorphism $u\in\End_k(\phi)$ a matrix $u_\mathfrak{l}\in \Mat_{r\times r}(A_\mathfrak{l})$ that contains most of the relevant information about $u$. Therefore it makes sense now to define the following.
\begin{definition}
For an endomorphism $u\in\End_k(\phi)$, the \emph{characteristic polynomial} of $u$ is $P_u(X)\coloneqq \det(x-u_\mathfrak{l})$.
\end{definition}
As a consequence of \cite[Theorem 4.2.5]{Papikian2023Drinfeld}, we know that $P_u(X)$ is a polynomial with coefficients in $A$ which do not depend on the choice of $\mathfrak{l}$.

\section{Characteristic polynomial of the Frobenius endomorphism}

Let $m>0$ be an integer, $p$ a prime number, $q$ a $p$-power, and $\vF_{q^m}$ the finite field with $q^m$ elements. Recall that a linearized polynomial is an element $L=\sum_{i=0}^r t_iZ^{q^i}\in\vF_{q^m}[Z]$. We suppose that $t_0\in\vF_{q^d}$ for a $d|m$, and $\vF_q(t_0)=\vF_{q^d}$. Let $0<\ell\in\N$ be a positive integer and consider now the Drinfeld module attached to $L$ over the extension $k'=\vF_{q^{m\ell}}$ defined by
\begin{align}\label{eq:PhiEll}
\phi^{(\ell)}:\vF_q[T] & \rightarrow k'\{\tau\}\nonumber \\
T & \mapsto \phi_T^{(\ell)}=\sum_{i=0}^r t_i\tau^i.
\end{align}

Since $\gamma(T)=t_0$, then the $\vF_q[T]$-characteristic of $k'$ is $\mathfrak{p} = (m_{t_0})$ where $m_{t_0}$ is the minimal polynomial of $t_0$ over $\vF_q$. Let $\pi=\tau^{m\ell}$ be the Frobenius for $k'$, let $P_{\phi^{(\ell)}}\coloneqq P_{\phi^{(\ell)}}(X)$ be the characteristic polynomial of $\pi$ when acting on the $\mathfrak{l}$-adic Tate module $T_\mathfrak{l}(\phi^{(\ell)})$ of $\phi^{(\ell)}$. From \cite[Theorem 4.2.2]{Papikian2023Drinfeld} we know that $P_{\phi^{(\ell)}}$ has coefficients in $\vF_q[T]$ which do not depend on the choice of the prime $\mathfrak{l}\in\vF_q[T]$. Let $Q_{\phi^{(\ell)}}\coloneqq Q_{\phi^{(\ell)}}(X)$ be the minimal polynomial of $\pi$ over $\vF_q(T)$. Since we know that $\pi$ is integral over $\vF_q[T]$ (see Proposition \ref{theo:integral}), then $Q_{\phi^{(\ell)}}$ is also the minimal polynomial of $\pi$ over $\vF_q[T]$, and $[\vF_q(T,\pi):\vF_q(T)] = \rank_{\vF_q[T]}\vF_q[T,\pi]$. As a result of the theory (see \cite[Theorem 4.2.5]{Papikian2023Drinfeld}), we have
\[
P_{\phi^{(\ell)}} = Q_{\phi^{(\ell)}}^{\frac{r}{[\vF_q(T,\pi):\vF_q(T)]}}.
\]

Let $K$ be a field. For a polynomial $F\in K[X]$, let $F=\prod_{i=1}^{\deg(F)}(X-\alpha_i)$ be its factorization into linear factors (with possible repeated roots) in $\bar{K}[X]$, where $\bar{K}$ is the algebraic closure of $K$. Let $\epsilon_\ell:K[X]\rightarrow \bar{K}[X]$ be the map defined by $\epsilon_\ell(F)=\prod_{i=1}^{\deg(F)}(X-\alpha_i^\ell)$.

Sometimes we need to distinguish the Frobenius for different values of $\ell$. Therefore, we write $\pi_\ell=\tau^{m\ell}$, and $d_\ell = \deg Q_{\phi^{(\ell)}} = [\vF_q(T,\pi_\ell):\vF_q(T)]$. We also denote by $S_\ell\supseteq \vF_q(T)$ the splitting field of $Q_{\phi^{(\ell)}}$, and by $A_\ell \coloneqq \Aut(S_\ell:\vF_q(T))$ its group of automorphisms.

\begin{proposition}\label{prop:QEllDividesEpsEll}
Let $0<m\in\N$ and $0<\ell\in\N$. Let $\phi^{(\ell)}$ be the Drinfeld module attached to a linearized polynomial $L$ over the extension $\vF_{q^{m\ell}}$.
Let $\pi_\ell=\tau^{m\ell}$, in particular $\pi_1=\tau^m$. Let $Q_{\phi^{(\ell)}}$ be the minimal polynomial of $\pi_\ell$ over $\vF_q(T)$ and $d_\ell = \deg Q_{\phi^{(\ell)}} = [\vF_q(T,\pi_\ell):\vF_q(T)]$ its degree. Let $\epsilon_\ell:K[X]\rightarrow \bar{K}[X]$ be the map defined by $\epsilon_\ell(F)=\prod_{i=1}^{\deg(F)}(X-\alpha_i^\ell)$. Then $Q_{\phi^{(\ell)}}$ divides $\epsilon_\ell(Q_{\phi^{(1)}})$.
\end{proposition}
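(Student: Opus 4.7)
The plan is to exploit the identity $\pi_\ell = \tau^{m\ell} = (\tau^m)^\ell = \pi_1^\ell$, which expresses $\pi_\ell$ as a polynomial in $\pi_1$. The proposition then reduces, by the defining minimality of $Q_{\phi^{(\ell)}}$, to the following two facts about $F(X) \coloneqq \epsilon_\ell(Q_{\phi^{(1)}})(X)$: first, $F$ has coefficients in $\vF_q(T)$ (a priori $F$ lives only in $\overline{\vF_q(T)}[X]$); second, $F(\pi_\ell) = 0$.

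For the first point, write $Q_{\phi^{(1)}}(X) = \prod_{i=1}^{d_1}(X - \alpha_i)$ in $\overline{\vF_q(T)}[X]$, so that $F(X) = \prod_i (X - \alpha_i^\ell)$. The coefficients of $F$ are, up to sign, the elementary symmetric polynomials evaluated at the $\alpha_i^\ell$, and by the fundamental theorem of symmetric polynomials they can be expressed as polynomial functions of the elementary symmetric polynomials of the $\alpha_i$ themselves, that is, of the coefficients of $Q_{\phi^{(1)}} \in \vF_q(T)[X]$. Hence $F \in \vF_q(T)[X]$.

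For the second point, I would use Proposition \ref{theo:integral}: $\pi_1$ is integral over $\vF_q[T]$ with minimal polynomial $Q_{\phi^{(1)}}$, so $\vF_q(T)(\pi_1)$ is a field of degree $d_1$ over $\vF_q(T)$. Fix an embedding $\sigma:\vF_q(T)(\pi_1) \hookrightarrow \overline{\vF_q(T)}$ of $\vF_q(T)$-algebras; then $\sigma(\pi_1) = \alpha_j$ for some index $j$, and consequently $\sigma(\pi_\ell) = \sigma(\pi_1)^\ell = \alpha_j^\ell$. Since $\alpha_j^\ell$ is a root of $F$ by construction and $F \in \vF_q(T)[X]$ by the previous paragraph, we obtain $F(\pi_\ell) = 0$ inside $\vF_q(T)(\pi_1)$. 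Because $\vF_q(T)(\pi_\ell) \subseteq \vF_q(T)(\pi_1)$, this equation also holds in the subfield relevant for the minimal polynomial of $\pi_\ell$, forcing $Q_{\phi^{(\ell)}} \mid F$.

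The main obstacle is conceptual rather than computational: one is tempted to factor $F$ directly inside the endomorphism ring as $\prod_i(\pi_\ell - \alpha_i^\ell)$ and argue vanishing factor by factor, but the $\alpha_i^\ell$ do not lie in that ring, and possible inseparability of $Q_{\phi^{(1)}}$ over $\vF_q(T)$ (we are working in positive characteristic over a function field) makes root-by-root arguments delicate. Passing to the abstract field extension $\vF_q(T)(\pi_1)$, defined purely from $Q_{\phi^{(1)}}$, sidesteps both difficulties, and the rest is bookkeeping.
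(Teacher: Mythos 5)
Your proof is correct and takes essentially the same route as the paper: both establish that $\epsilon_\ell(Q_{\phi^{(1)}})$ has coefficients in $\vF_q(T)$ via the fundamental theorem of symmetric polynomials, then use $\pi_\ell=\pi_1^\ell$ and the fact that $\pi_1$ is a root of $Q_{\phi^{(1)}}$ to conclude that $\pi_\ell$ is a root of $\epsilon_\ell(Q_{\phi^{(1)}})$, whence minimality gives the divisibility. Your embedding argument merely makes explicit the identification of $\pi_1$ with one of the roots $\alpha_j$, a step the paper leaves implicit.
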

\begin{proof}
First of all, notice that the map $\epsilon_\ell$ does not change the degree of the polynomial. Moreover, since $Q_{\phi^{(1)}}$ is a polynomial over $\vF_q[T]$, then even $\epsilon_\ell(Q_{\phi^{(1)}})\in\vF_q[T][X]$. To see this, suppose $\alpha_1,\ldots,\alpha_{d_1}\in S_1$ are all the roots of $Q_{\phi^{(1)}}$, with possible repetitions (these polynomials might a priori be inseparable). Then we can consider the factorization into linear factors of $Q_{\phi^{(1)}}$ in its splitting field $S_1$:
\begin{equation}\label{eq:QPhi1}
Q_{\phi^{(1)}} = \prod_{i=0}^{d_1}(X-\alpha_i).
\end{equation}
On the other hand, if $s_1=s_1(X_1,\ldots,X_{d_1}),\ldots,s_{d_1}=s_{d_1}(X_1,\ldots,X_{d_1})$ are the $d_1$ elementary symmetric polynomials in the unknowns $X_1,\ldots,X_{d_1}$, we can write
\[
Q_{\phi^{(1)}} = X^{d_1}-s_1(\alpha_1,\ldots,\alpha_{d_1})X^{d_1-1}+\ldots+(-1)^{d_1}s_{d_1}(\alpha_1,\ldots,\alpha_{d_1}),
\]
and since $Q_{\phi^{(1)}}\in\vF_q[T][X]$ the evaluation at $\alpha=(\alpha_1,\ldots,\alpha_{d_1})\in S_1^{d_1}$ defines a homomorphism over the $\vF_q$-algebra generated by $s_1,\ldots,s_{d_1}$:
\begin{align*}
   \ev_\alpha :  \vF_q[s_1,\ldots,s_{d_1}] & \rightarrow \vF_q[T] \\
   f(X_1,\ldots,X_{d_1}) & \mapsto \ev_\alpha(f)=f(\alpha_1,\ldots,\alpha_{d_1}).
\end{align*}
Now, applying the map $\epsilon_\ell$ to the Equation \eqref{eq:QPhi1}, we immediately see that $\alpha_1^\ell,\ldots,\alpha_{d_1}^\ell$ are all the roots of $\epsilon_\ell(Q_{\phi^{(1)}})$, with possible repetitions. In terms of symmetric polynomials we have
\[
\epsilon_\ell(Q_{\phi^{(1)}}) = X^{d_1}-s_1(\alpha_1^\ell,\ldots,\alpha_{d_1}^\ell)X^{d_1-1}+\ldots+(-1)^{d_1}s_{d_1}(\alpha_1^\ell,\ldots,\alpha_{d_1}^\ell).
\]
By the Fundamental Theorem of Symmetric Polynomials (see \cite[Chapter 5, Proposition 2.20]{Hungerford2003Algebra}), since for $i=1,\ldots,d_1$ each polynomial $s_i(X_1^\ell,\ldots,X_{d_1}^\ell)$ is symmetric, it belongs to the $\vF_q$-algebra generated by the elementary symmetric polynomials $s_1,\ldots,s_{d_1}$. This implies that 
\begin{equation}\label{eq:Sym}
    s_i(\alpha_1^\ell,\ldots,\alpha_{d_1}^\ell)=\ev_\alpha(s_i(X_1^\ell,\ldots,X_{d_1}^\ell)) \in\vF_q[T], 
\end{equation}
namely $\epsilon_\ell(Q_{\phi^{(1)}})\in\vF_q[T][X]\subset \vF_q(T)[X]$.

Finally, since $\pi_1$ is a root of $Q_{\phi^{(1)}}$, then $\pi_\ell$ is a root of $\epsilon_\ell(Q_{\phi^{(1)}})$, but on the other hand $\pi_\ell$ is a root of its own minimal polynomial $Q_{\phi^{(\ell)}}$. By minimality, $Q_{\phi^{(\ell)}}$ must divide $\epsilon_\ell(Q_{\phi^{(1)}})$.
\end{proof}

\begin{remark}\label{rem:QEllQ1Split}
Notice that since $Q_{\phi^{(\ell)}}$ divides $\epsilon_\ell(Q_{\phi^{(1)}})$, then the splitting field $S_\ell$ is contained in the splitting field of $\epsilon_\ell(Q_{\phi^{(1)}})$ which, in turn, is contained in $S_1$. Recall that $A_i=\Aut(S_i:\vF_q(T))$.
Therefore, we have that the restriction of a automorphisms to $S_\ell$ gives a surjective map $A_1\twoheadrightarrow A_\ell$ whose kernel has size $|A_1|/|A_\ell|$, namely every automorphism of $A_\ell$ can be extended to $|A_1|/|A_\ell|$ automorphisms in $A_1$ (see \cite[Chapter 5, Theorem 3.8]{Hungerford2003Algebra}).
\end{remark}

\begin{proposition}\label{prop:EpsEllPowerQEll}
Let $0<m\in\N$ and $0<\ell\in\N$. Let $\phi^{(\ell)}$ be the Drinfeld module attached to a linearized polynomial $L$ over the extension $\vF_{q^{m\ell}}$.
Let $\pi_\ell=\tau^{m\ell}$, in particular $\pi_1=\tau^m$. Let $Q_{\phi^{(\ell)}}$ be the minimal polynomial of $\pi_\ell$ over $\vF_q(T)$ and $d_\ell = \deg Q_{\phi^{(\ell)}} = [\vF_q(T,\pi_\ell):\vF_q(T)]$ its degree. Let $\epsilon_\ell:K[X]\rightarrow \overline K[X]$ be the map defined by $F\mapsto \epsilon_\ell(F)=\prod_{i=1}^{\deg(F)}(X-\alpha_i^\ell)$. Then the following equality holds:
\[
\epsilon_\ell(Q_{\phi^{(1)}}) = Q_{\phi^{(\ell)}}^\frac{d_1}{d_\ell}.
\]
\end{proposition}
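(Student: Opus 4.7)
The plan is to upgrade the divisibility $Q_{\phi^{(\ell)}}\mid \epsilon_\ell(Q_{\phi^{(1)}})$ from Proposition \ref{prop:QEllDividesEpsEll} to the asserted equality. My strategy is to factor $\epsilon_\ell(Q_{\phi^{(1)}})=Q_{\phi^{(\ell)}}^{k}\cdot R$ in $\vF_q(T)[X]$ with $R$ monic and $\gcd(R,Q_{\phi^{(\ell)}})=1$, and to prove that $R=1$. The exponent $k=d_1/d_\ell$ will then be forced by comparing degrees, since $\epsilon_\ell$ preserves the degree of the polynomial (it produces a product of $\deg(F)$ linear factors).

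The heart of the proof is to show that every root of $\epsilon_\ell(Q_{\phi^{(1)}})$ in $\overline{\vF_q(T)}$ is already a root of $Q_{\phi^{(\ell)}}$. I would write $Q_{\phi^{(1)}}=\prod_{i=1}^{d_1}(X-\alpha_i)$ as in \eqref{eq:QPhi1}, labeling so that $\alpha_1=\pi_1$. Since $Q_{\phi^{(1)}}$ is irreducible over $\vF_q(T)$ and $S_1$ is its splitting field, hence a normal extension, the group $A_1=\Aut(S_1:\vF_q(T))$ acts transitively on the set of distinct roots $\{\alpha_i\}$. Thus for each $i$ there is $\sigma_i\in A_1$ with $\sigma_i(\pi_1)=\alpha_i$. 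Applying $\sigma_i$ to $\pi_\ell=\pi_1^\ell$ gives $\sigma_i(\pi_\ell)=\alpha_i^\ell$, and since $Q_{\phi^{(\ell)}}\in\vF_q(T)[X]$ is fixed coefficient-wise by $\sigma_i$ and $\pi_\ell$ is a root, $\alpha_i^\ell$ is also a root. Therefore every root of $\epsilon_\ell(Q_{\phi^{(1)}})$ lies in the zero locus of $Q_{\phi^{(\ell)}}$.

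Combining this with the factorization above, any root of $R$ in $\overline{\vF_q(T)}$ would also be a root of $Q_{\phi^{(\ell)}}$; but $\gcd(R,Q_{\phi^{(\ell)}})=1$ in $\vF_q(T)[X]$ yields, via B\'ezout in $\overline{\vF_q(T)}[X]$, that $R$ and $Q_{\phi^{(\ell)}}$ share no roots in the algebraic closure. Hence $R$ has no roots at all, and being monic it must equal $1$. A degree count then gives $d_1=k\cdot d_\ell$, so $k=d_1/d_\ell$ (in particular this is an integer), and the claimed equality follows. The one subtle point I expect to verify carefully is the transitivity of $A_1$ on the distinct roots of $Q_{\phi^{(1)}}$, which is delicate if $Q_{\phi^{(1)}}$ happens to be inseparable; this is however standard for any normal extension, since any $\vF_q(T)$-isomorphism $\vF_q(T)(\alpha_i)\to \vF_q(T)(\alpha_j)$ sending $\alpha_i\mapsto\alpha_j$ extends to an automorphism of the normal closure $S_1$.
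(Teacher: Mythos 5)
Your proof is correct, and it takes a genuinely different route from the paper's. The paper tracks root multiplicities explicitly: writing $u_\ell$ for the common multiplicity of the roots of the irreducible $Q_{\phi^{(\ell)}}$, it introduces the auxiliary polynomials $D_\ell=\prod_{\lambda\in A_\ell}(X-\lambda(\pi_\ell))^{u_\ell}$, identifies $D_\ell=Q_{\phi^{(\ell)}}^{u_\ell|A_\ell|/d_\ell}$ via the orbit--stabilizer theorem, computes $\epsilon_\ell(D_1)^{u_\ell}$ in two ways using the restriction map $A_1\twoheadrightarrow A_\ell$ of Remark~\ref{rem:QEllQ1Split}, and extracts $d_1$-th roots from the resulting identity of monic polynomials. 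You instead isolate the one statement that does all the work --- every root $\alpha_i^\ell$ of $\epsilon_\ell(Q_{\phi^{(1)}})$ is a root of $Q_{\phi^{(\ell)}}$, since transitivity of $A_1$ on the roots of the irreducible $Q_{\phi^{(1)}}$ gives $\sigma_i(\pi_1)=\alpha_i$, hence $\sigma_i(\pi_\ell)=\sigma_i(\pi_1^\ell)=\alpha_i^\ell$, and $\sigma_i$ fixes the coefficients of $Q_{\phi^{(\ell)}}$ --- and then let unique factorization in $\vF_q(T)[X]$ do the bookkeeping: the cofactor $R$ is coprime to $Q_{\phi^{(\ell)}}$, yet all of its roots lie among the roots of $Q_{\phi^{(\ell)}}$, so $R=1$, and the exponent is forced by the degree count $d_1=k\,d_\ell$ (which also yields $d_\ell\mid d_1$ as a byproduct rather than an input). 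The two points you flagged are indeed the ones to verify, and both are fine: $\epsilon_\ell(Q_{\phi^{(1)}})\in\vF_q(T)[X]$ is supplied by Proposition~\ref{prop:QEllDividesEpsEll}, and transitivity of $\Aut(S_1:\vF_q(T))$ on the roots of an irreducible polynomial split by $S_1$ requires no separability hypothesis, by the isomorphism-extension theorem. Your version is shorter and absorbs the inseparability issue into the UFD argument instead of carrying the multiplicities $u_\ell$ through the computation; the only thing the paper's longer route buys is the explicit intermediate identity for $D_\ell$, which is not used elsewhere.
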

\begin{remark}
Notice that this proposition is pretty much automatic when the extension is Galois. Since $Q_{\phi^{(\ell)}}$ might be inseparable, we need a more careful analysis.
Also, notice that it is immediate to see that the target space of $\epsilon_\ell$ can be restricted to $K[X]$ simply because if $s(x_1,\dots,x_N)$ is a symmetric polynomial, so is $s(x_1^\ell,\dots,x_N^\ell)$.
\end{remark}
\begin{proof}[Proof of Proposition \ref{prop:EpsEllPowerQEll}]

For every $\ell\in\N$, since $Q_{\phi^{(\ell)}}$ is irreducible, all its roots have the same multiplicity, say $u_\ell$, dividing $d_\ell$. Consider now the factorization of $Q_{\phi^{(1)}}$ in its splitting field $S_1$, then $A_1$ acts transitively on its roots. This means that we can find $\sigma_1,\ldots,\sigma_{\frac{d_1}{u_1}}\in A_1$ pairwise distinct for which
\[Q_{\phi^{(1)}} = \prod_{i=1}^{\frac{d_1}{u_1}}(X-\sigma_i(\pi_1))^{u_1}.\]

This obviously implies that
\[
\epsilon_\ell(Q_{\phi^{(1)}}) = \prod_{i=1}^{\frac{d_1}{u_1}}(X-\sigma_i(\pi_\ell))^{u_1}.
\]

In the same way we can find $\delta_1,\ldots,\delta_{\frac{d_\ell}{u_\ell}}\in A_\ell$ such that $\delta_1(\pi_\ell),\ldots,\delta_{\frac{d_\ell}{u_\ell}}(\pi_\ell)$ are pairwise distinct, and for which
\[
Q_{\phi^{(\ell)}} = \prod_{i=1}^{\frac{d_\ell}{u_\ell}}(X-\delta_i(\pi_\ell))^{u_\ell}.
\]

Now let us define the polynomial
\[
D_\ell = \prod_{\lambda\in A_\ell}(X-\lambda(\pi_\ell))^{u_\ell}.
\]

Obviously $Q_{\phi^{(\ell)}} | D_\ell$. However, since $\delta_1(\pi_\ell),\ldots,\delta_{\frac{d_\ell}{u_\ell}}(\pi_\ell)\in A_\ell$ are pairwise distinct, they form a complete set of representatives for the cosets in $A_\ell$ of the stabilizer of $\pi_\ell$. By Orbit-Stabilizer Theorem, this means that for every $\delta_i$ we can find $u_\ell|A_\ell|/d_\ell$ automorphisms in $A_\ell$ whose action on $\pi_\ell$ is the same as the action of $\delta_i$. This implies that not only $Q_{\phi^{(\ell)}}$ divides $D_\ell$, but also:
\begin{equation}\label{eq:DEll}
D_\ell = Q_{\phi^{(\ell)}}^\frac{u_\ell|A_\ell|}{d_\ell}.
\end{equation}
In particular we have
\[
D_1 = Q_{\phi^{(1)}}^\frac{u_1|A_1|}{d_1}
\]
and if we compute $\epsilon_\ell(D_1)^{u_\ell}$ we get
\begin{equation}\label{eq:EpsEll1}
\epsilon_\ell(D_1)^{u_\ell} = \epsilon_\ell(Q_{\phi^{(1)}}^\frac{u_1|A_1|}{d_1})^{u_\ell} =  (\epsilon_\ell(Q_{\phi^{(1)}}))^\frac{u_1u_\ell|A_1|}{d_1}.
\end{equation}

On the other hand, as a consequence of the Remark~\ref{rem:QEllQ1Split}, $|A_1|/|A_\ell|$ automorphisms of $A_1$ have the same action on $S_\ell$, and since $\pi_\ell\in S_\ell$ we have

\begin{align}\label{eq:EpsEll2}
\begin{split}
\epsilon_\ell(D_1)^{u_\ell} & 
= \epsilon_\ell \Big( \prod_{\sigma\in A_1}(X-\sigma(\pi_1))^{u_1}\Big)^{u_\ell} = \prod_{\sigma\in A_1}(X-\sigma(\pi_\ell))^{u_1u_\ell} = \\
& = \Big( \prod_{\lambda\in A_\ell}(X-\lambda(\pi_\ell)) \Big)^\frac{u_1u_\ell|A_1|}{|A_\ell|} = D_\ell^\frac{u_1|A_1|}{|A_\ell|}.
\end{split}
\end{align}

Finally, combining Equations \eqref{eq:EpsEll1}, \eqref{eq:EpsEll2} and \eqref{eq:DEll} we get
\[
(\epsilon_\ell(Q_{\phi^{(1)}}))^\frac{u_1u_\ell|A_1|}{d_1} = D_\ell^\frac{u_1|A_1|}{|A_\ell|} = Q_{\phi^{(\ell)}}^\frac{u_1u_\ell|A_1|}{d_\ell}.
\]

Raising both sides to $d_1$ and using the fact that both polynomials are monic, we prove our claim.
\end{proof}

\begin{corollary}\label{cor:PEllEqEpsEll}
Let $0<m\in\N$ and $0<\ell\in\N$. Let $\phi^{(\ell)}$ be the Drinfeld module attached to a linearized polynomial $L$ over the extension $\vF_{q^{m\ell}}$.
Let $\pi_\ell=\tau^{m\ell}$, in particular $\pi_1=\tau^m$. Let $Q_{\phi^{(\ell)}}$ be the minimal polynomial of $\pi_\ell$ over $\vF_q(T)$ and $d_\ell = \deg Q_{\phi^{(\ell)}} = [\vF_q(T,\pi_\ell):\vF_q(T)]$ be its degree. Let $P_{\phi^{(\ell)}}$ be the characteristic polynomial of $\pi_\ell$ when acting on the $\mathfrak{l}$-adic Tate module $T_\mathfrak{l}(\phi^{(\ell)})$ of $\phi^{(\ell)}$. Let $\epsilon_\ell:K[X]\rightarrow \overline K[X]$ be the map defined by $F\mapsto \epsilon_\ell(F)=\prod_{i=1}^{\deg(F)}(X-\alpha_i^\ell)$. Then the following equality holds:
\[
P_{\phi^{(\ell)}} = \epsilon_\ell(P_{\phi^{(1)}}).
\]
\end{corollary}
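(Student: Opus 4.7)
The plan is to derive this corollary directly from Proposition~\ref{prop:EpsEllPowerQEll} together with the relation between characteristic and minimal polynomials recalled in the paragraph preceding Proposition~\ref{prop:QEllDividesEpsEll}, namely
\[
P_{\phi^{(\ell)}} = Q_{\phi^{(\ell)}}^{\,r/d_\ell} \qquad \text{for every } \ell \geq 1,
\]
and in particular $P_{\phi^{(1)}} = Q_{\phi^{(1)}}^{\,r/d_1}$. Given this, the only substantive thing to verify is that the operator $\epsilon_\ell$ behaves well with respect to taking powers.

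First I would record the elementary fact that $\epsilon_\ell$ is multiplicative on $K[X]$: if $F = G \cdot H$ factors in $K[X]$, then the multiset of roots of $F$ in $\bar K$ is the disjoint union (with multiplicities) of those of $G$ and of $H$, so directly from the definition $\epsilon_\ell(F) = \epsilon_\ell(G) \cdot \epsilon_\ell(H)$. In particular $\epsilon_\ell(F^n) = \epsilon_\ell(F)^n$ for any positive integer $n$, and the quotients $r/d_1$ and $r/d_\ell$ appearing above are positive integers, so applying $\epsilon_\ell$ to $P_{\phi^{(1)}}$ is unproblematic.

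Then the proof reduces to the chain of equalities
\[
\epsilon_\ell(P_{\phi^{(1)}}) = \epsilon_\ell\bigl(Q_{\phi^{(1)}}^{\,r/d_1}\bigr) = \epsilon_\ell(Q_{\phi^{(1)}})^{\,r/d_1} = \bigl(Q_{\phi^{(\ell)}}^{\,d_1/d_\ell}\bigr)^{\,r/d_1} = Q_{\phi^{(\ell)}}^{\,r/d_\ell} = P_{\phi^{(\ell)}},
\]
where the first equality uses the recalled expression for $P_{\phi^{(1)}}$, the second uses the multiplicativity of $\epsilon_\ell$, the third is precisely the content of Proposition~\ref{prop:EpsEllPowerQEll}, the fourth collapses the exponent, and the last uses the recalled expression for $P_{\phi^{(\ell)}}$.

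There is really no main obstacle here: the hard combinatorial work has already been absorbed into Proposition~\ref{prop:EpsEllPowerQEll} (which dealt with possible inseparability of $Q_{\phi^{(1)}}$ via the polynomials $D_\ell$ and the Orbit--Stabilizer argument). The only point to double-check is that the exponent $r/d_1$ in $P_{\phi^{(1)}} = Q_{\phi^{(1)}}^{r/d_1}$ is an integer, so that one can legitimately feed a power of $Q_{\phi^{(1)}}$ into $\epsilon_\ell$ and pull the exponent out; this is guaranteed by the divisibility $d_\ell \mid r$ recorded in the statement $P_{\phi^{(\ell)}} = Q_{\phi^{(\ell)}}^{r/d_\ell}$ itself.
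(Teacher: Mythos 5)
Your proof is correct and follows essentially the same chain of equalities as the paper's own proof, which likewise combines $P_{\phi^{(\ell)}} = Q_{\phi^{(\ell)}}^{r/d_\ell}$ with Proposition~\ref{prop:EpsEllPowerQEll}. Your explicit verification that $\epsilon_\ell$ is multiplicative and that the exponents are integers is a small but welcome addition of detail that the paper leaves implicit.
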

\begin{proof}
The proof is a direct computation that follows from the definition of the characteristic polynomial and Proposition~\ref{prop:EpsEllPowerQEll}:
\[
\epsilon_\ell(P_{\phi^{(1)}}) = \epsilon_\ell(Q_{\phi^{(1)}}^\frac{r}{d_1}) = (\epsilon_\ell(Q_{\phi^{(1)}}))^\frac{r}{d_1} = (Q_{\phi^{(\ell)}}^\frac{d_1}{d_\ell})^\frac{r}{d_1} = Q_{\phi^{(\ell)}}^\frac{r}{d_\ell} = P_{\phi^{(\ell)}}.
\]
\end{proof}

\section{Linear Recurrence Sequences and Characteristic polynomial of a Linearized polynomial}

\subsection{Linear Recurrence Sequences}
For a complete treatment about linear recurrence sequences see for example \cite{Everest2003Recurrence}. If not specified otherwise, we denote by $\vR$ a commutative ring with identity, by $\vR[X]$ the ring of polynomials in the indeterminate $X$ with coefficients over $\vR$, and by $\vR^{\langle 1\rangle}$ the set of all the sequences $\{a_n\}_{n=0}^\infty\subset\vR$. 

\begin{definition}\label{def:LinearRec1}
A sequence $\{a_n\}_{n=0}^\infty\in\vR^{\langle 1\rangle}$ is called \emph{linear recurrence sequence} over $\vR$ if, for any $n \geq 1$, it satisfies a homogeneous linear recurrence relation of the form
\begin{equation}\label{eq:LinearSeq}
    a_{n+d} = c_{d-1} a_{n+d-1} + \ldots + c_1 a_{n+1} + c_0 a_n,
\end{equation}
with fixed integer $d\geq 1$ and constant coefficients $c_0,\ldots,c_{d-1}\in\vR$. The integer $d$ is called the \emph{order of the relation}. We denote by $LRS(\vR)$ the set of all linear recurrence sequences over $\vR$.  
\end{definition}

\begin{definition}
The polynomial associated to the relation \eqref{eq:LinearSeq} is
    \[f(X)=X^d-c_{d-1}X^{d-1}-\ldots-c_1X-c_0\in\vR[X]\]
and it is called the \emph{characteristic polynomial} of the linear recurrence relation. 
\end{definition}

If $\vR$ is a ring without zero divisors, then any linear recurrence sequence satisfies a recurrence relation of minimal length. The characteristic polynomial of the minimal length relation is called the \emph{minimal polynomial} of the sequence, and any characteristic polynomial is divisible by such minimal polynomial.
\begin{definition}
    The \emph{order} of a linear recurrence sequence is the degree of the minimal polynomial of the sequence.
\end{definition}

From now on, to simplify the notation, we could sometimes write $\{a_n\}$ for $\{a_n\}_{n=0}^\infty$.

\begin{remark}
We can define an action of $\vR[X]$ on $\vR^{\langle 1\rangle}$. Indeed, given a polynomial $g(X)=\sum_{i=0}^m r_i X^i \in\vR[X]$ and a sequence $\{a_n\}_{n\in \vN}\in \vR^{\langle 1\rangle}$, define the action by
\[
g(X)\cdot\{a_n\}_{n\in \vN}\coloneqq\{b_n\}_{n\in \vN}=\left\{\sum_{i=0}^m r_i a_{n+i}\right\}_{n\in \vN}\in \vR^{\langle 1\rangle}
\]
It is easy to verify that for $f(X),g(X)\in \vR[X]$ 
\[(f(X)g(X))\cdot \{a_n\}_{n\in \vN}=f(X)\cdot(g(X)\cdot\{a_n\}_{n\in \vN}).\]
This gives to $\vR^{\langle 1\rangle}$ an $\vR[X]$-module structure. Note that a sequence $\{a_n\}_{n\in \vN}$ is a linear recurrence sequence if and only if it is annihilated by some nonzero monic polynomial $f(X)\in\vR[X]$, i.e., $f(X)\cdot\{a_n\}_{n\in \vN}=0$. 
\end{remark}

By definition, recall that if $\{a_n\}_{n=0}^\infty\in LRS(\vR)$ then it has characteristic polynomials defined over $\vR$. It is well know in literature that the sum and the product of two linear recurrence sequences defined over a field $F$ are still linear recurrence sequences over $F$ (see for example \cite[Theorem 4.1]{Everest2003Recurrence}). Nevertheless, when we consider sequences over an arbitrary ring $\vR$, such results are generally not true. However, many of the expected results still hold when $\vR$ is a commutative ring with identity. The following is an easy exercise.

\begin{proposition}\label{prop:LinearAddition}
Let $\vR$ be a commutative ring with identity. Let $\{a_n\}_{n=0}^\infty, \{b_n\}_{n=0}^\infty\in LRS(\vR)$ of order $s$ and $t$ respectively. Then $\{c_n=a_n+b_n\}_{n=0}^\infty$ is linear recurrence sequence over $\vR$ of order at most $s+t$. In other words, the set $LRS(\vR)$ of linear recurrence sequences over $\vR$ is a submodule of $\vR^{\langle 1\rangle}$.
\end{proposition}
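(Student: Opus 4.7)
The plan is to exploit the $\vR[X]$-module structure on $\vR^{\langle 1\rangle}$ that was set up in the preceding remark; the proposition is really just a statement about annihilators in that module. First I would pick monic characteristic polynomials $f(X)\in\vR[X]$ of $\{a_n\}$ and $g(X)\in\vR[X]$ of $\{b_n\}$ with $\deg f=s$ and $\deg g=t$, so that by hypothesis $f(X)\cdot\{a_n\}=0$ and $g(X)\cdot\{b_n\}=0$ in $\vR^{\langle 1\rangle}$.

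Next, set $h(X):=f(X)g(X)\in\vR[X]$; this is again monic, of degree $s+t$. Using the associativity of the $\vR[X]$-action recorded in the remark, together with commutativity of $\vR[X]$, I would compute
\[
h(X)\cdot\{a_n\}=g(X)\cdot\bigl(f(X)\cdot\{a_n\}\bigr)=g(X)\cdot 0=0,
\]
and symmetrically $h(X)\cdot\{b_n\}=0$. Since the action of any fixed polynomial on $\vR^{\langle 1\rangle}$ is additive in the sequence (this is immediate from the explicit formula $g(X)\cdot\{a_n\}=\bigl\{\sum_i r_i a_{n+i}\bigr\}$), it follows that
\[
h(X)\cdot\{c_n\}=h(X)\cdot\{a_n\}+h(X)\cdot\{b_n\}=0.
\]
Thus $\{c_n\}$ is annihilated by the nonzero monic polynomial $h$ of degree $s+t$, which by the characterization in the remark exhibits $\{c_n\}$ as a linear recurrence sequence whose associated recurrence has length $s+t$, hence of order at most $s+t$.

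Finally, to obtain the second sentence of the proposition, I would note that the zero sequence is trivially in $LRS(\vR)$, that closure under addition is exactly what was just proved, and that for $r\in\vR$ the identity $f(X)\cdot\{r a_n\}=r\cdot(f(X)\cdot\{a_n\})=0$ shows closure under scalar multiplication; together these give that $LRS(\vR)$ is an $\vR$-submodule of $\vR^{\langle 1\rangle}$. There is no real obstacle here: the argument is entirely formal once the module-theoretic viewpoint is adopted, and the only mild subtlety is the terminology of "order" in a general commutative ring (where minimal polynomials need not exist), which is handled by interpreting "order at most $s+t$" as the existence of a monic annihilator of that degree—precisely what $h=fg$ provides.
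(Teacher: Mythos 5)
Your proof is correct and follows essentially the same route as the paper: annihilate $\{a_n+b_n\}$ by the product $f(X)g(X)$ of two characteristic polynomials, using commutativity of $\vR[X]$ and additivity of the action. If anything, your phrasing is slightly more careful than the paper's, since you work with arbitrary monic characteristic polynomials rather than ``minimal polynomials,'' which need not exist over a general commutative ring.
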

\begin{proof}
    Suppose $f(X),g(X)\in\vR[X]$ are two minimal polynomials for $\{a_n\}$ and $\{b_n\}$ respectively. In particular, this means that $f(X)\cdot\{a_n\}=0$ and $g(X)\cdot\{b_n\}=0$. Now simply observe that the product $f(X)g(X)\in\vR[X]$ annihilates the sequence $a_n+b_n$. 
    This means that $f(X)g(X)\in\vR[X]$ is a characteristic polynomial for the sequence $\{a_n+b_n\}_{n=0}^\infty$. Since $f(X)g(X)$ has degree $s+t$, the claim is proved.
\end{proof}

We now prove a lemma that simplifies the notation in the proof of Corollary~\ref{cor:CEllLinearSeq}.
\begin{lemma}\label{lemma:SeqProd}
    Let $F$ be a field, and $\vR=F[T]$ be the ring of polynomials in $T$ with coefficients over $F$. Let $\{a_n\}_{n=0}^\infty\in LRS(\vR)$ be of order $d>0$, and let $\{b_n\}_{n=0}^\infty\in LRS(F)$ be of order 1. Then the product sequence $\{b_na_n\}_{n=0}^\infty$ belongs to $LRS(\vR)$ and it is of order $d$.
\end{lemma}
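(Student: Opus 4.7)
The plan is to bound the order of $\{b_n a_n\}$ both above and below by $d$. First I unpack the hypothesis on $\{b_n\}$: having order exactly $1$ means its minimal polynomial is $X-c$ for some $c\in F$, so $b_n=b_0 c^n$ with $b_0\in F^\times$. In the non-degenerate case $c\in F^\times$ (the situation arising in the intended application, where $\{b_n\}$ is a genuine geometric progression), so $b_n\in F^\times$ for every $n$. Write the minimal polynomial of $\{a_n\}$ as $f(X)=X^d-\sum_{i=0}^{d-1}c_i X^i \in \vR[X]$, encoding the recurrence $a_{n+d}=\sum_{i=0}^{d-1}c_i a_{n+i}$.

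For the upper bound I would multiply this recurrence through by $b_{n+d}$ and use the geometric identity $b_{n+d}=c^{d-i}b_{n+i}$ to obtain
\[
b_{n+d}a_{n+d}=\sum_{i=0}^{d-1} c_i c^{d-i} b_{n+i} a_{n+i}.
\]
This exhibits the monic polynomial $g(X)=X^d-\sum_{i=0}^{d-1}c_i c^{d-i} X^i \in \vR[X]$ of degree $d$ annihilating $\{b_n a_n\}$, proving $\{b_n a_n\}\in LRS(\vR)$ with order at most $d$.

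For the lower bound I argue by contradiction. Suppose the order is some $d'<d$, with minimal polynomial $h(X)=X^{d'}-\sum_{i=0}^{d'-1}h_i X^i \in \vR[X]$. The annihilation identity $b_{n+d'}a_{n+d'}=\sum_{i=0}^{d'-1} h_i b_{n+i} a_{n+i}$, combined with $b_{n+k}=c^k b_n$, allows me to factor out the common nonzero scalar $b_n$ and rearrange to produce
\[
a_{n+d'}=\sum_{i=0}^{d'-1}(h_i c^{i-d'}) a_{n+i}.
\]
The critical observation is that $c\in F^\times$ implies $c^{i-d'}\in F\subset\vR$, so the new coefficients $h_i c^{i-d'}$ remain inside $\vR$. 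Hence $\{a_n\}$ satisfies a recurrence over $\vR$ of length $d'<d$, contradicting the minimality of $f$. This step is the main obstacle: the invertibility of $c$ in $\vR$ (equivalently $c\neq 0$) is exactly what keeps the derived recurrence inside $\vR$ rather than escaping to the fraction field $F(T)$, where a contradiction with minimality over $\vR$ would not be available.
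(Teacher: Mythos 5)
Your proof is correct and follows essentially the same route as the paper's: the upper bound comes from multiplying the length-$d$ recurrence for $\{a_n\}$ through by $b_{n+d}$ and using the geometric relation to fold the $b$-terms into the coefficients, and the lower bound comes from dividing a hypothetical shorter recurrence for $\{b_na_n\}$ by the nonzero $b$-terms to contradict minimality of $d$. Your explicit flagging of the non-degeneracy condition $c\in F^\times$ is the same assumption the paper uses implicitly when it divides by $\beta$ (and it does hold in the intended application, where $\beta$ is a nonzero norm).
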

\begin{proof}
    Write minimal recurrence relations for $\{a_n\}_{n=0}^\infty$ and $\{b_n\}_{n=0}^\infty$ as follows:
    \[
    \begin{gathered}
        a_{n+d} = c_{d-1} a_{n+d-1} + \ldots + c_1 a_{n+1} + c_0 a_n ; \\
        b_{n+1} = \beta b_{n}
    \end{gathered}
    \]
    where $c_{d-1},\ldots,c_0\in F[T]$ and $\beta\in F$. Now, expanding the product $b_{n+d}a_{n+d}$, using repeatedly the relation for $\{b_n\}$, we get
    \begin{align}\label{eq:SameCoeff}
    \begin{split}
        b_{n+d}a_{n+d} & = \beta b_{n+d-1}a_{n+d} = \beta b_{n+d-1}(c_{d-1} a_{n+d-1} + \ldots + c_1 a_{n+1} + c_0 a_n)  \\
        & =  \beta b_{n+d-1}c_{d-1}a_{n+d-1} + \beta b_{n+d-1}c_{d-2}a_{n+d-2}+\ldots + \beta b_{n+d-1} c_0a_{n} \\
        & = \beta b_{n+d-1}c_{d-1}a_{n+d-1} + \beta^2 b_{n+d-2}c_{d-2}a_{n+d-2}+\ldots + \beta^d b_{n} c_0a_{n} \\
        & = (\beta c_{d-1})b_{n+d-1}a_{n+d-1} + (\beta^2 c_{d-2})b_{n+d-2}a_{n+d-2} + \ldots + (\beta^d c_0)b_n a_n.
    \end{split}
    \end{align}
   This simply means that the product sequence $\{b_na_n\}_{n=0}^\infty\subseteq  \vR$ is a linear recurrence sequence of order at most $d$. Suppose now its order is $r<d$. Then we could write
   \[
    b_{n+r}a_{n+r} =  \bar{c}_{r-1} b_{n+r-1}a_{n+r-1} + \ldots + \bar{c}_0 b_{n}a_{n}
   \]
   for some $\bar{c}_{r-1},\ldots,\bar{c}_0\in F[T]$. Since $F$ is a field, we could divide by $b_{n+r}$ and obtain
   \begin{align*}
   a_{n+r} & =  \bar{c}_{r-1} \frac{b_{n+r-1}}{b_{n+r}}a_{n+r-1} + \ldots + \bar{c}_0 \frac{b_{n}}{b_{n+r}}a_{n}  \\
   & = \frac{\bar{c}_{r-1}}{\beta}a_{n+r-1} + \ldots + \frac{\bar{c}_0}{\beta^r}a_{n}
   \end{align*}
   that is a recurrence relation for $\{a_n\}_{n=0}^\infty\subseteq F[T]$ of order $r<d$. This contradicts the minimality of $d$, therefore it must be $r=d$.
\end{proof}

\begin{remark}\label{rem:LinearRecurNthTerm}
Let $\{a_n\}_{n=0}^\infty$ be a linear recurrence sequence satisfying a homogeneous linear recurrence relation of order $d$ as in Equation~\eqref{eq:LinearSeq}. There is a simple method to compute the term $a_{n+d}$ of the sequence (see for example \cite{miller1966algorithm}, \cite{bostan2021simple}). For any $n\in \mathbb{N}$ define the \textit{$n$-th state vector} to be the column vector 
\[U_n\coloneqq [a_{n+d-1},a_{n+d-2},\ldots,a_n]^\top.\]
Consider the matrix 
\begin{equation}\label{eq:matrix}
    M \coloneqq \begin{bmatrix}
    c_{d-1} & c_{d-2} & \dots & c_1 & c_0 \\
    1 & 0 & \dots & 0 & 0 \\
    \vdots & \vdots & \ddots & \vdots & \vdots \\
    0 & 0 & \dots & 1 & 0
\end{bmatrix}\in\Mat_d(\vR).
\end{equation}
It is straightforward to check that 
\[
M^{n+1} U_0 = U_{n+1}.
\]
The $i$-th power of the matrix $M$ can be computed efficiently using square and multiply
\[
M^i =
\begin{cases}
    (M^{i/2})^2, & \text{if } i \text{ is even}; \\
    M \cdot M^{i-1}, & \text{if } i \text{ is odd}.
\end{cases}
\]
This approach allows us to do the computation of  $M^{n+1}$ in $O(\log(n+1))=O(\log(n))$ matrix multiplications via exponentiation by squaring, i.e. $O(d^3\log(n+1))=O(d^3\log(n))$ multiplications over $\vR$. Consequently, given $U_0$ and $M$, one can obtain $U_{n+1}$ in $O(d^3\log(n+1)+d^2)=O(d^3\log(n))$ multiplications over $\vR$. Finally, we get the term $a_{n+d}$ of the linear recurrence sequence as the first entry of $U_{n+1}$, with a computational cost of $O(d^3\log n)$ over $\vR$.

\end{remark}

\subsection{Characteristic polynomial of a linearized polynomial}
We are now ready  to provide the theoretical foundations of our algorithm, which heavily rely on the theory of Drinfeld modules.

As before, take $\ell \in \N_{>0}$, and consider a linearized polynomial over $k=\vF_{q^m}$ of the form
\begin{equation}\label{eq:LinearPoly}
L=\sum_{i=0}^r t_iZ^{q^i}\in \vF_{q^m}[Z].
\end{equation}
For $k'=\vF_{q^{m\ell}}$ extension of $\vF_q$, denote by $C_L^{(\ell)}\in \vF_q[T]$ the characteristic polynomial of $L$ seen as linear map over $k'$. To determine $C_L^{(\ell)}$, we use the Drinfeld module $\phi^{(\ell)}$ attached to $L$ over the extension $k'$, as defined by Equation~\eqref{eq:PhiEll}. Recall that $P_{\phi^{(\ell)}}\coloneqq P_{\phi^{(\ell)}}(X)$ is the characteristic polynomial of the Frobenius $\pi_\ell\coloneqq \tau^{m\ell}$ when acting on the $\mathfrak{l}$-adic Tate module $T_\mathfrak{l}(\phi^{(\ell)})$ of $\phi^{(\ell)}$.

\begin{remark}\label{rem:FittingIdeal}
In general, if $\phi$ is a Drinfeld module defined over an $\vF_q[T]$-field $k$, let $^\phi k$ denote the $\vF_q[T]$-module, whose underlying group is $(k,+)$ subject to the action defined by $a\circ\beta \coloneqq \phi_a(\beta)$ for every $a\in\vF_q[T]$ and $\beta\in k$. Since $^\phi k$ is finitely generated, then we have an isomorphism
\[
^\phi k \cong \vF_q[T]/{(a_1)} \oplus\ldots\oplus\vF_q[T]/{(a_s)}
\]
for uniquely determined monic polynomials $a_1,\ldots,a_s\in\vF_q[T]$ of positive degrees such that $a_i\mid a_{i+1}$. Each $a_i$ is called an \emph{invariant factor} of $^\phi k$, and the product of all the invariant factors $\chi(^\phi k)\coloneqq\prod_{i=1}^s a_i$ is called the \emph{fitting ideal} of $^\phi k$. As a standard linear algebra fact (see for example \cite[Exercise 1.2.7]{Papikian2023Drinfeld} or \cite[Chapter 12, Proposition 20]{dummit2004abstract}), we also know that $\chi(^\phi k)$ is the characteristic polynomial of $\phi_T$ considered as an $\vF_q$-linear map over $k$. This means that, setting $\phi=\phi^{(\ell)}$ and $k=k'$, we have $\chi(^\phi k) = C_L^{(\ell)}$.

Finally, as a consequence of \cite[Theorem 4.2.6]{Papikian2023Drinfeld}, observe that $(P_{\phi^{(\ell)}}(1))=(\chi(^\phi k))=(C_L^{(\ell)})$ which implies, using Corollary~\ref{cor:PEllEqEpsEll}, that $C_L^{(\ell)}=v^{(\ell)}P_{\phi^{(\ell)}}(1)=v^{(\ell)}\epsilon_\ell(P_{\phi^{(1)}})(1)$ for some $v^{(\ell)}\in \vF_q$. 
\end{remark}

\begin{remark}\label{rem:NormCoeff}
The constant term $v^{(\ell)}\in \vF_q$ in the previous Remark~\ref{rem:FittingIdeal} can be computed explicitly. Indeed, since $C_L^{(\ell)}$ is a monic polynomial in $\vF_q[T]$, $v^{(\ell)}$ must be the inverse of the leading coefficient of $P_{\phi^{(\ell)}}(1)\in \vF_q[T]$. Therefore, using \cite[Theorem 4.2.7]{Papikian2023Drinfeld}, it must be
\[
v^{(\ell)}=(-1)^{(r-1)m\ell-r}\Nr_{k'/\vF_q}(t_r),
\]
where $t_r$ is the leading coefficient of $L$.
Since $t_r\in \vF_{q^m}$, and $\vF_q\subset \vF_{q^m}\subset\vF_{q^{m\ell}}=k'$, the following holds:
\[
\Nr_{k'/\vF_q}(t_r)=\Nr_{\vF_{q^m}/\vF_q}(\Nr_{k'/\vF_{q^m}}(t_r))
=\Nr_{\vF_{q^m}/\vF_q}(t_r^{\ell})
=(\Nr_{\vF_{q^m}/\vF_q}(t_r))^\ell.
\]
To simplify the notation, let $N\coloneqq\Nr_{\vF_{q^m}/\vF_q}(t_r)\in \vF_q$. Then we have 
\[
v^{(\ell)}=(-1)^{(r-1)m\ell-r}N^\ell.
\]
In particular, a simple computation shows that $v^{(\ell)}=(-1)^{(r-1)m}Nv^{(\ell-1)}$, which implies that the sequence $\{v^{(\ell)}\}_{\ell\in\N_{>0}}$ is a linear recurrence sequence of order 1.
\end{remark}

We have now all the tools to show that $\{C_L^{(\ell)}\}_{\ell\in\N_{>0}}$ is a linear recurrence sequence. We start with a preliminary proposition.

\begin{proposition}\label{prop:PEll1LinearSeq}
Let $L$ be a linearized polynomial of $q$-degree $r$ as given in Equation~\eqref{eq:LinearPoly}. Let $\ell\in\N_{>0}$ and $\phi^{(\ell)}$ be the Drinfeld module attached to $L$ over the extension $\vF_{q^{m\ell}}$ as defined in Equation~\eqref{eq:PhiEll}.
Let $\pi_\ell=\tau^{m\ell}$ be the Frobenius endomorphism. Let $P_{\phi^{(\ell)}}\coloneqq P_{\phi^{(\ell)}}(X)$ be the characteristic polynomial of $\pi_\ell$ when acting on the $\mathfrak{l}$-adic Tate module $T_\mathfrak{l}(\phi^{(\ell)})$ of $\phi^{(\ell)}$. Then the sequence $\{P_{\phi^{(\ell)}}(1)\}_{\ell\in\N_{>0}}$ is a linear recurrence sequence of order at most $2^r$ in $\vF_q[T]$.
\end{proposition}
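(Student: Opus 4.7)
The plan is to express $P_{\phi^{(\ell)}}(1)$ as a short linear combination of $\ell$-th powers of fixed algebraic quantities, and then to annihilate this sum by a polynomial with coefficients in $\vF_q[T]$. By Corollary~\ref{cor:PEllEqEpsEll}, $P_{\phi^{(\ell)}} = \epsilon_\ell(P_{\phi^{(1)}})$; if $\alpha_1,\dots,\alpha_r$ are the roots of $P_{\phi^{(1)}}$ in an algebraic closure $\overline{\vF_q(T)}$ (counted with multiplicity; note $P_{\phi^{(1)}}$ has degree $r$ since the Tate module has rank $r$), then
\[
P_{\phi^{(\ell)}}(1) \;=\; \prod_{i=1}^{r}\bigl(1-\alpha_i^{\ell}\bigr) \;=\; \sum_{S\subseteq\{1,\dots,r\}} (-1)^{|S|} \beta_S^{\ell},
\]
where I set $\beta_S \coloneqq \prod_{i\in S}\alpha_i$ (so $\beta_\emptyset=1$). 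This is a sum of exactly $2^r$ geometric sequences in $\ell$.

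Next I would annihilate this sum. For each $S$, the geometric sequence $\{\beta_S^\ell\}_\ell$ is killed by $X-\beta_S$, so the full sum $\{P_{\phi^{(\ell)}}(1)\}_\ell$ is annihilated by the product
\[
H(X) \;\coloneqq\; \prod_{S\subseteq\{1,\dots,r\}} (X-\beta_S),
\]
a monic polynomial of degree $2^r$ with coefficients in $\overline{\vF_q(T)}$. A recurrence of order $2^r$ therefore holds a priori over this closure; the task is to descend it to $\vF_q[T]$.

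The main (and essentially only) substantive step is showing $H(X)\in\vF_q[T][X]$. The key observation is that the multiset $\{\beta_S : S\subseteq\{1,\dots,r\}\}$ is invariant under any permutation of $\alpha_1,\dots,\alpha_r$, since a permutation of indices merely relabels the subsets $S$. Hence each coefficient of $H(X)$, being an elementary symmetric polynomial in the $\beta_S$'s, is a symmetric polynomial in $\alpha_1,\dots,\alpha_r$. By the Fundamental Theorem of Symmetric Polynomials (as already invoked in the proof of Proposition~\ref{prop:QEllDividesEpsEll}), each such coefficient is a polynomial in the elementary symmetric functions of the $\alpha_i$'s, which are (up to sign) the coefficients of $P_{\phi^{(1)}}(X)\in\vF_q[T][X]$. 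Thus $H(X)\in\vF_q[T][X]$.

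Finally, since each $P_{\phi^{(\ell)}}(1)$ itself lies in $\vF_q[T]$, the annihilation identity $H(X)\cdot\{P_{\phi^{(\ell)}}(1)\}_{\ell\geq 1}=0$, initially over $\overline{\vF_q(T)}$, is an equality of sequences with entries in $\vF_q[T]$, so it exhibits $H(X)$ as a characteristic polynomial of the sequence over $\vF_q[T]$. Therefore $\{P_{\phi^{(\ell)}}(1)\}_{\ell\in\N_{>0}}$ is a linear recurrence sequence over $\vF_q[T]$ of order at most $\deg H = 2^r$, as claimed. I expect the sole delicate point to be the symmetry argument that places $H$ in $\vF_q[T][X]$; the rest is a direct expansion and an appeal to the standard fact that the sum of LRS's is an LRS with characteristic polynomial the product (Proposition~\ref{prop:LinearAddition}, applied iteratively to the geometric summands once the common annihilator $H$ is in hand).
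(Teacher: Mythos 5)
Your proof is correct, and it reaches the same $2^r$ bound through the same underlying decomposition of $P_{\phi^{(\ell)}}(1)$ into the $2^r$ geometric summands $(-1)^{|S|}\beta_S^\ell$, but the mechanism for producing the recurrence over $\vF_q[T]$ is genuinely different from the paper's. The paper groups the subsets by size, observes that $s_i(\beta_1^\ell,\dots,\beta_r^\ell)$ is the first elementary symmetric function of the $\ell$-th powers of the $\binom{r}{i}$ products $\beta_V$, and applies the Girard--Newton identities to get an explicit order-$\binom{r}{i}$ recurrence for each such sequence, with coefficients shown to lie in $\vF_q[T]$ by a symmetry argument; it then sums these via Proposition~\ref{prop:LinearAddition}. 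You instead skip Newton's identities entirely and exhibit the single annihilator $H(X)=\prod_S(X-\beta_S)$ of degree $2^r$ directly, descending to $\vF_q[T]$ by the observation that the multiset $\{\beta_S\}$ is stable under permutations of the $\alpha_i$ together with the Fundamental Theorem of Symmetric Polynomials (the same device used in Proposition~\ref{prop:QEllDividesEpsEll}). Your route is cleaner and avoids any worry about Newton's identities in characteristic $p$; what the paper's route buys is the explicit level-by-level factorization $f(X)=(X-1)\prod_i f_{s_i}(X)$ of the characteristic polynomial, which is reused in Proposition~\ref{prop:CoefficientBound} to bound the $T$-degrees of the recurrence coefficients by $m2^{r-1}$ (a bound you could also extract from your $H$, since $H$ coincides with that product, but you would have to redo the absolute-value estimate on its coefficients). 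Two minor points: your closing appeal to Proposition~\ref{prop:LinearAddition} is unnecessary and slightly off, since the individual geometric summands live in $\overline{\vF_q(T)}$ rather than $\vF_q[T]$ and that proposition is stated over a fixed commutative ring --- your direct argument that $H$ kills each summand, followed by the observation that the resulting identity only involves elements of $\vF_q[T]$, is the correct and sufficient justification; and the descent step deserves the explicit remark that the Fundamental Theorem expresses the coefficients of $H$ as polynomials with coefficients in the prime field evaluated at the elementary symmetric functions of the $\alpha_i$, which are, up to sign, the coefficients of $P_{\phi^{(1)}}\in\vF_q[T][X]$.
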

\begin{proof}
Let $\beta_1,...,\beta_r\in\overline{\vF_q(T)}$ be all the roots (not necessarily distinct) of $P_{\phi^{(1)}}$. Let also $s_1(X_1,\ldots,X_{r}), \ldots, s_r(X_1,\ldots,X_{r})$ be the $r$ elementary symmetric polynomials in the unknowns $X_1,\ldots,X_{r}$. By Corollary~\ref{cor:PEllEqEpsEll} we can write
\[
P_{\phi^{(\ell)}}(X) = \prod_{i=1}^r(x-\beta_i^\ell) = X^r-s_1(\beta_1^\ell,\ldots,\beta_r^\ell)X^{r-1}+\ldots+(-1)^rs_r(\beta_1^\ell,\ldots,\beta_r^\ell),
\]
and evaluating at 1 we get
\begin{equation}\label{eq:PEll1}
P_{\phi^{(\ell)}}(1) = 1-s_1(\beta_1^\ell,\ldots,\beta_r^\ell)+\ldots+(-1)^rs_r(\beta_1^\ell,\ldots,\beta_r^\ell).
\end{equation}
Notice that for any $\ell>0$ and $i=1,\ldots,r$ we have $s_i(\beta_1^\ell,\ldots,\beta_r^\ell)\in\vF_q[T]$.

Now, for $\ell\geq r$, by Girard–Newton formula for symmetric polynomials (for reference see for example \cite[\S 12]{Edwards1984Galois} or \cite[p. 203]{newton1967mathematical}), the following holds:
\begin{equation}\label{eq:S_1Recurrence}
s_1(\beta_1^\ell,\ldots,\beta_r^\ell)=\sum_{j=1}^r(-1)^{j-1}s_j(\beta_1,\ldots,\beta_r)s_1(\beta_1^{\ell-j},\ldots,\beta_r^{\ell-j}). 
\end{equation}
Therefore $\{s_1(\beta_1^\ell,\ldots,\beta_r^\ell)\}_{\ell\in\N_{>0}} \subseteq \vF_q[T]$ is a linear recurrence sequence of order at most $r$ in $\vF_q[T]$, namely $\{s_1(\beta_1^\ell,\ldots,\beta_r^\ell)\}_{\ell\in\N_{>0}} \in LRS(\vF_q[T])$ .

On the other hand, for any $V\subset\{1,\ldots,r\}$ such that $|V|=i$, let $\beta_V\coloneqq\prod_{v\in V}\beta_v$. Let then $\overline{s_j}(\ldots,X_V,\ldots)$ be the $j$-th symmetric polynomial in $\binom{r}{i}$ variables, for $j=1,\ldots,\binom{r}{i}$. Then, for each fixed $i=2,\ldots,r$ we can write
\[s_i(\beta_1^\ell,\ldots,\beta_r^\ell)=\overline{s_1}(\ldots,\beta_V^\ell,\ldots).
\]           
Now, applying again the Girard–Newton formula for symmetric polynomials, we have 
\begin{equation}\label{eq:S_iRecurrence}
\overline{s_1}(\ldots,\beta_V^\ell,\ldots)
=\sum_{j=1}^{\binom{r}{i}}(-1)^{j-1}\overline{s_j}(\ldots,\beta_V,\ldots)\overline{s_1}(\ldots,\beta_V^{\ell-j},\ldots).
\end{equation}
However, notice that not only is $\overline{s_j}(\ldots,\beta_V,\ldots)$ invariant under permutations of the $\beta_V$'s, but it is also invariant under permutations of the $\beta_i$'s. This means that $\overline{s_j}(\ldots,\beta_V,\ldots) = f(\beta_1,\ldots,\beta_r)$ for a certain symmetric polynomial $f\in\vF_q[s_1,\ldots,s_r]$. Since the evaluation at the $\beta_i$'s is an $\vF_q$-algebra homomorphism, we get that $\overline{s_j}(\ldots,\beta_V,\ldots)\in\vF_q[T]$.

Therefore, each sequence $\{s_i(\beta_1^\ell,\ldots,\beta_r^\ell)\}_{\ell\in\N_{>0}}$ is also in $LRS(\vF_q[T])$ but of order at most $\binom{r}{i}$. Finally, notice that $\{1\}_{\ell\in\N_{>0}}$ is obviously a linear recurrence sequence of order 1. Using now Proposition~\ref{prop:LinearAddition} we easily see that $\{P_{\phi^{(\ell)}}(1)\}_{\ell\in\N_{>0}}$ is a linear recurrence sequence of order at most $\sum_{i=0}^r\binom{r}{i} = 2^r$.
\end{proof}

\begin{corollary}\label{cor:CEllLinearSeq}
Let $L$ be a linearized polynomial of $q$-degree $r$ as given in equation~\eqref{eq:LinearPoly}. Then the sequence $\{C_L^{(\ell)}\}_{\ell\in\N_{>0}}\subseteq \vF_q[T]$ of characteristic polynomials of $L$ as a linear map on $\vF_{q^{m\ell}}$ is a linear recurrence sequence of order at most $2^r$.   
\end{corollary}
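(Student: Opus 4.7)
The plan is to assemble the result directly from the three ingredients already developed: Remark~\ref{rem:FittingIdeal}, Remark~\ref{rem:NormCoeff}, and Proposition~\ref{prop:PEll1LinearSeq}, and then apply Lemma~\ref{lemma:SeqProd} to handle the product.

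First I would recall that Remark~\ref{rem:FittingIdeal} gives the factorization
\[
C_L^{(\ell)} = v^{(\ell)}\,P_{\phi^{(\ell)}}(1),
\]
where $v^{(\ell)}\in\vF_q$ is the scalar identified in Remark~\ref{rem:NormCoeff}. That remark further shows that $\{v^{(\ell)}\}_{\ell\in\N_{>0}}$ satisfies the one-term recurrence $v^{(\ell)} = (-1)^{(r-1)m} N\,v^{(\ell-1)}$ with $N = \Nr_{\vF_{q^m}/\vF_q}(t_r)\in\vF_q$, so it belongs to $LRS(\vF_q)$ and has order $1$.

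Next I would invoke Proposition~\ref{prop:PEll1LinearSeq}, which states that $\{P_{\phi^{(\ell)}}(1)\}_{\ell\in\N_{>0}}$ is a linear recurrence sequence in $\vF_q[T]$ of order at most $2^r$; call this order $d\leq 2^r$. Now apply Lemma~\ref{lemma:SeqProd} with $F=\vF_q$, $\vR=\vF_q[T]$, $\{a_\ell\}=\{P_{\phi^{(\ell)}}(1)\}$ and $\{b_\ell\}=\{v^{(\ell)}\}$: the lemma produces an explicit recurrence of order $d$ for the product sequence $\{v^{(\ell)} P_{\phi^{(\ell)}}(1)\}_{\ell\in\N_{>0}}$, and this is precisely $\{C_L^{(\ell)}\}_{\ell\in\N_{>0}}$. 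Hence $\{C_L^{(\ell)}\}\in LRS(\vF_q[T])$ of order at most $2^r$, which is the claim.

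There is essentially no obstacle here, as the corollary is a clean assembly of previous results; the only minor point requiring a moment of care is that Lemma~\ref{lemma:SeqProd} is stated for a sequence $\{a_n\}$ of order $d>0$, so one should verify that $\{P_{\phi^{(\ell)}}(1)\}$ is genuinely nonzero (equivalently, of positive order) for the argument to apply literally; this is immediate since $P_{\phi^{(\ell)}}(1)$ has degree at most $m\ell$ as a polynomial in $T$ and is nonzero for generic $\ell$, or, alternatively, one observes that the conclusion is trivially true when $\{P_{\phi^{(\ell)}}(1)\}$ is the zero sequence. Either way, the bound of $2^r$ on the order is preserved.
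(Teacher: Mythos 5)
Your proof is correct and follows exactly the paper's own argument: factor $C_L^{(\ell)} = v^{(\ell)}P_{\phi^{(\ell)}}(1)$ via Remark~\ref{rem:FittingIdeal}, invoke Remark~\ref{rem:NormCoeff} and Proposition~\ref{prop:PEll1LinearSeq} for the two recurrences, and combine them with Lemma~\ref{lemma:SeqProd}. The extra care you take about the positivity of the order of $\{P_{\phi^{(\ell)}}(1)\}$ is a reasonable (if minor) refinement that the paper glosses over.
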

\begin{proof}
From Remark~\ref{rem:FittingIdeal}, recall that 
\[
\{C_L^{(\ell)}\}_{\ell\in\N_{>0}}=\{v^{(\ell)}P_{\phi^{(\ell)}}(1)\}_{\ell\in\N_{>0}}.
\]
By Remark~\ref{rem:NormCoeff}, the sequence $\{v^{(\ell)}\}_{\ell\in\N_{>0}}$ satisfies a linear recurrence relation of order 1 over $\vF_q$, and by Proposition~\ref{prop:PEll1LinearSeq}, the sequence $\{P_{\phi^{(\ell)}}(1)\}_{\ell\in\N_{>0}}$ satisfies a linear recurrence relation of order $d\leq 2^r$ over $\vF_q[T]$. As a consequence of Lemma~\ref{lemma:SeqProd} we have that $\{C_L^{(\ell)}\}_{\ell\in\N_{>0}}\subseteq \vF_q[T]$ is a linear recurrence sequence of order exactly $d$.
\end{proof}

The upper bound for the order of the linear recurrence sequence $\{C_L^{(\ell)}\}_{\ell=1}^\infty$ stated in Corollary~\ref{cor:CEllLinearSeq} is sharp, as shown by the following example.
\begin{example}
Set $q=7,m=2$ and $r=3$. Simply choose $L=Z^{7^3}+Z^{7^2}+Z^7+Z\in\vF_{7^2}[Z]$. Then the linear recurrence sequence $\{C_L^{(\ell)}\}_{\ell\in\N}$ has order $2^r=8$ with minimal polynomial:

$X^8 + 6T^2X^7 + (2T^3 + T^2 + T + 3)X^6 + (5T^3 + T^2)X^5 + (T^6 + T^5 + 6T^4 + 6T^3 + 5T^2 + 4T + 6)X^4 + (5T^5 + 5T^4 + 3T^3 + T^2)X^3 + (2T^7 + 2T^5 + 4T^4 + 6T^3 + T^2 + 3T + 3)X^2 + (6T^8 + 6T^7 + 6T^6 + 6T^5 + 6T^4 + 6T^3 + 6T^2)X + T^8 + 6T^7 + 6T + 1$.
\end{example}

\begin{proposition}\label{prop:CoefficientBound}
Set $k'=\vF_{q^{m\ell}}$ and let $C_L^{(\ell)}$ be the characteristic polynomial of $\phi^{(\ell)}$.
    There exists a characteristic polynomial of degree at most $2^r$ for the sequence $\{C_L^{(\ell)}\}_{\ell\in\N_{>0}}\subseteq \vF_q[T]$ with coefficients having $T$-degrees upper bounded by $m2^{r-1}$. In particular, such bound does not depend on $q$ or $\ell$.
\end{proposition}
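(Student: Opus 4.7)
The plan is to exhibit an explicit annihilating polynomial of degree $2^r$ for $\{C_L^{(\ell)}\}_{\ell\in\vN_{>0}}$ and then bound its coefficient degrees via valuations. Starting from Corollary~\ref{cor:PEllEqEpsEll}, I would expand
\[
P_{\phi^{(\ell)}}(1) \;=\; \prod_{i=1}^r (1 - \beta_i^\ell) \;=\; \sum_{V \subseteq \{1,\ldots,r\}} (-1)^{|V|}\, \beta_V^\ell,
\]
where $\beta_1,\ldots,\beta_r\in\overline{\vF_q(T)}$ are the roots of $P_{\phi^{(1)}}$ (with multiplicity) and $\beta_V\coloneqq\prod_{v\in V}\beta_v$. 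Using the order-$1$ recurrence $v^{(\ell)} = \mu v^{(\ell-1)}$ with $\mu\in\vF_q$ from Remark~\ref{rem:NormCoeff}, the sequence $C_L^{(\ell)}=v^{(\ell)}P_{\phi^{(\ell)}}(1)$ becomes an $\vF_q(T)$-linear combination of the $2^r$ geometric sequences $\{(\mu\beta_V)^\ell\}_{\ell}$, and is therefore annihilated by
\[
\tilde R(X) \;=\; \prod_{V \subseteq \{1,\ldots,r\}}(X - \mu\beta_V) \;\in\; \overline{\vF_q(T)}[X],
\]
a monic polynomial of degree exactly $2^r$.

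Next I would verify $\tilde R(X)\in\vF_q[T][X]$. Its coefficients are symmetric in the $2^r$ values $\mu\beta_V$, and since the action of $S_r$ on $\{\beta_1,\ldots,\beta_r\}$ sends $\beta_V$ to $\beta_{\sigma(V)}$ it permutes the multiset $\{\beta_V\}_V$ as a whole; hence these coefficients are symmetric polynomials over $\vF_q$ in $\beta_1,\ldots,\beta_r$ (the $\mu^j$ factors lie in $\vF_q$). By the Fundamental Theorem of Symmetric Polynomials they are $\vF_q$-polynomials in the elementary symmetric functions of the $\beta_i$'s, which are the coefficients of $P_{\phi^{(1)}}\in\vF_q[T][X]$, and hence they live in $\vF_q[T]$.

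The main estimate is the $T$-degree bound. Invoking the Weil-type statement for Drinfeld modules over finite fields (see \cite[Theorem 4.2.7]{Papikian2023Drinfeld}), each root $\beta_v$ satisfies $\deg_T(\beta_v)=m/r$ in the unique extension of $v_\infty$ to $\overline{\vF_q(T)}$, so $\deg_T(\mu\beta_V)=m|V|/r$. Since $v_\infty$ is non-archimedean, the coefficient $c_j$ of $X^{2^r-j}$ in $\tilde R(X)$, being (up to sign) the $j$-th elementary symmetric polynomial in the $\mu\beta_V$'s, satisfies
\[
\deg_T(c_j) \;\leq\; \max_{|W|=j}\sum_{V\in W}\deg_T(\mu\beta_V) \;\leq\; \sum_{V\subseteq\{1,\ldots,r\}}\frac{m|V|}{r} \;=\; \frac{m}{r}\cdot r\cdot 2^{r-1} \;=\; m\cdot 2^{r-1},
\]
independent of $q$ and $\ell$, as required.

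The step I expect to be the main obstacle is the careful invocation of the Weil bound on $\deg_T(\beta_v)$: since the proofs of Propositions~\ref{prop:EpsEllPowerQEll} and \ref{prop:PEll1LinearSeq} already flagged that $P_{\phi^{(1)}}$ may be inseparable, one must make sure that $\deg_T(\beta_v)=m/r$ holds for each of the $r$ roots counted with multiplicity, so that the ultrametric/Newton-polygon inequality used above applies uniformly to all $2^r$ products $\beta_V$. Once that is secured, the remaining manipulations are essentially mechanical.
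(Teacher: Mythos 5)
Your proposal is correct, and it reaches the bound $m2^{r-1}$ by a genuinely more direct route than the paper. The paper assembles the annihilator as a product $(X-1)\prod_{i=1}^r f_{s_i}(X)$, where each $f_{s_i}$ is the characteristic polynomial of the Girard--Newton recurrence satisfied by $\{s_i(\beta_1^\ell,\ldots,\beta_r^\ell)\}_\ell$; it bounds the coefficients of each factor by $\frac{m}{r}i\binom{r}{i}$ via the ultrametric inequality and then sums these per-factor bounds over $i$, invoking Proposition~\ref{prop:LinearAddition} and Lemma~\ref{lemma:SeqProd} to handle the sum of the $s_i$-sequences and the multiplication by $\{v^{(\ell)}\}$. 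You instead expand $P_{\phi^{(\ell)}}(1)=\sum_{V}(-1)^{|V|}\beta_V^\ell$, absorb the normalization $v^{(\ell)}=(-1)^r\mu^\ell$ into the ratios, and write down the annihilator $\tilde R(X)=\prod_V(X-\mu\beta_V)$ in one stroke; this is (up to the $\mu$-twist) the same polynomial the paper constructs, but you bound \emph{each} of its coefficients by a single ultrametric estimate $\sum_{V}\frac{m|V|}{r}=m2^{r-1}$, bypassing Girard--Newton, Lemma~\ref{lemma:SeqProd}, and the summation over factors entirely. Your integrality argument ($S_r$ permutes the multiset $\{\beta_V\}_V$, so the coefficients of $\tilde R$ are symmetric in the $\beta_i$ and hence lie in $\vF_q[T]$) matches the paper's argument for $\overline{s_j}(\ldots,\beta_V,\ldots)\in\vF_q[T]$ in Proposition~\ref{prop:PEll1LinearSeq}. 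The inseparability concern you flag at the end is not an actual obstacle: \cite[Theorem 4.2.7]{Papikian2023Drinfeld} gives $|\beta_v|_{\mathbb{C}_\infty}=q^{m/r}$ for every root of $P_{\phi^{(1)}}$ in $\mathbb{C}_\infty$ irrespective of multiplicity, which is exactly the fact the paper itself uses (with $\deg_T$ understood as $\log_q|\cdot|_{\mathbb{C}_\infty}$, since the $\beta_v$ are algebraic over $\vF_q(T)$ rather than polynomials). If anything, your formulation yields the slightly stronger conclusion that every individual coefficient of a single degree-$2^r$ annihilator is bounded by $m2^{r-1}$, with the maximum attained only at the constant term.
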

\begin{proof}
Using Corollary \ref{cor:CEllLinearSeq}, it simply remains to prove that the coefficients have $T$-degrees bounded by $m2^{r-1}$.
First of all, notice that if the claim is true for a characteristic polynomial for the sequence $\{P_{\phi^{(\ell)}}(1)\}_{\ell\in\N_{>0}}\subseteq\vF_q[T]$ then, because of Equation~\eqref{eq:SameCoeff}, it must be true also for a characteristic polynomial for $\{C_L^{(\ell)}\}_{\ell\in\N_{>0}}\subseteq\vF_q[T]$. Therefore, it is sufficient to prove the statement for the sequence $\{P_{\phi^{(\ell)}}(1)\}_{\ell\in\N_{>0}}\subseteq\vF_q[T]$.

Using the same notation as in the proof of Proposition~\ref{prop:PEll1LinearSeq}, consider the characteristic polynomials $f_{s_1}(X)$ and $f_{s_i}(X)$ for $i\in \{2,\ldots,r\}$ associated to the recurrence relations in Equation~\eqref{eq:S_1Recurrence} and Equation~\eqref{eq:S_iRecurrence} respectively. By Proposition~\ref{prop:LinearAddition}, the polynomial $f(X)=(X-1)\cdot\prod_{i=1}^r f_{s_i}(X)$ is a characteristic polynomial for $\{P_{\phi^{(\ell)}}(1)\}_{\ell\in\N_{>0}}$.

By Riemann Hypothesis for Drinfeld modules (see \cite[Theorem 4.2.7]{Papikian2023Drinfeld}), the coefficient in $f_1(X)$ with the highest degree is $s_r(\beta_1,\ldots,\beta_r)$, whose $T$-degree is at most $m$.

On the other hand, for each $i=2,\ldots,r$, to understand which is the coefficient in $f_{s_i}(X)$ with the highest $T$-degree, we need to know the degree of each $\overline{s_j}(\ldots,\beta_V,\ldots)$ for $j=1,\ldots,\binom{r}{i}$.

Using the same notation as in \cite[Theorem 4.2.7]{Papikian2023Drinfeld}, let $\infty$ be the place at infinity of $\vF_q(T)$, $F_\infty$ be the completion at $\infty$ of $\vF_q(T)$, and $\mathbb{C}_\infty$ be the completion of an algebraic closure of $F_\infty$. Let $|\cdot|_{\mathbb{C}_\infty}$ be the unique extension of the normalized absolute value $q^{-\ord_\infty(\cdot)}$ on $F_\infty$ to $\mathbb{C}_\infty$. Then all the $\beta_i$'s have the same $\infty$-adic absolute value $|\cdot|_{\mathbb{C}_\infty}$ in $\mathbb{C}_\infty$, and notice that $\pi_1=\tau^m$ is one of such $\beta_i$'s.

If $\Tilde{\infty}$ is the unique place of $\Tilde{F}=\vF_q(T,\pi_1)$ above $\infty$, and $e$ is its ramification index, then, from \cite[Equation 4.1.2]{Papikian2023Drinfeld}, we have
\[
\frac{m}{r}=-\frac{1}{e}\ord_{\Tilde{\infty}}(\pi_1).
\]
Notice that $|\cdot|_{\mathbb{C}_\infty}$ is also the unique extension of the absolute value $q^{-\frac{1}{e}\ord_{\Tilde{\infty}}(\cdot)}$ on $\Tilde{F}_{\Tilde{\infty}}$ to $\mathbb{C}_\infty$, where $\Tilde{F}_{\Tilde{\infty}}$ is the completion of $\Tilde{F}$ at $\Tilde{\infty}$.

Now, since $\overline{s_j}$, up to sign, is the $j$-th elementary symmetric polynomial in $\binom{r}{i}$ variables, and $\deg_T(\overline{s_j}(\ldots,\beta_V,\ldots)) = -\ord_\infty(\overline{s_j}(\ldots,\beta_V,\ldots))$, then from ultrametric triangle inequality we get

\begin{align*}
q^{ -\ord_\infty(\overline{s_j}(\ldots,\beta_V,\ldots))} &= \mid \overline{s_j}(\ldots,\beta_V,\ldots) \mid_{\mathbb{C}_\infty} \\
&\leq \max_{\substack{b_1,\dots,b_j,\\ \text{distinct}}} \mid \beta_{V_{b_1}}\cdots\beta_{V_{b_j}} \mid_{\mathbb{C}_\infty} \\
&=  \max_{\substack{b_1,\dots,b_j,\\ \text{distinct}}}  \prod_{k=1}^j\mid \beta_{V_{b_k}} \mid_{\mathbb{C}_\infty} \\
& =  \max_{\substack{b_1,\dots,b_j,\\ \text{distinct}}} \prod_{k=1}^j \prod_{v\in V_{b_k}} \mid \beta_v \mid_{\mathbb{C}_\infty} \\
& = \max_{\substack{b_1,\dots,b_j,\\ \text{distinct}}} \prod_{k=1}^j \prod_{v\in V_{b_k}} \mid \pi_1 \mid_{\mathbb{C}_\infty} = \mid \pi_1 \mid_{\mathbb{C}_\infty}^{ij} \\
& = q^{-\frac{1}{e}\ord_{\Tilde{\infty}}(\pi_1)ij } = q^{ \frac{m}{r}ij }.
\end{align*}
This means that $\deg_T(\overline{s_j}(\ldots,\beta_V,\ldots)) \leq \frac{m}{r}ij $ for $1\leq j\leq\binom{r}{i}$. Therefore, for every $i=2,\ldots,r$, the coefficients in $f_{s_i}(X)$ have $T$-degree at most $\frac{m}{r} i \binom{r}{i}$.

As a result, the coefficients in $f(X)$ (given as the product of the coefficients in the $f_{s_i}$'s) have $T$-degree upper bounded by
\[
m+\sum_{i=2}^r \frac{m}{r} i \binom{r}{i} = \frac{m}{r} \sum_{i=1}^r i \binom{r}{i} = \frac{m}{r}2^{r-1} r = m2^{r-1} .
\]

\end{proof}

\section{Computing $C_L^{(\ell)}$ in $O(n\log^4(n))$}

We conclude by providing our algorithm for the computation of the characteristic polynomial of a linearized polynomial of low $q$-degree, as a linear map over a large extension field.

We already know that we have a linear recurrence relation for $\{C_L^{(\ell)}\}_{\ell\in\N_{>0}}$ of order $d \le 2^r$ as 
\begin{equation}\label{eq:LinearEq}
  C_L^{(k+d)} = c_{d-1} C_L^{(k+d-1)} + \ldots + c_1 C_L^{(k+1)} + c_0 C_L^{(k)},  
\end{equation}
where $c_i\in \vF_q[T]$. Suppose also that for $i=0,\ldots,d-1$ we have $\deg c_i\leq B$, where $B$ does not depend on $q$ or $\ell$. Notice that, by Proposition~\ref{prop:PEll1LinearSeq} and Proposition~\ref{prop:CoefficientBound}, it is always possible to find at least one recurrence relation that satisfies all the conditions (with $d=2^r$ and $B=\frac{1}{2}md$).

Let now $\ell>2^{r+1}$ and $n=m\ell$. We describe now an algorithm to compute $C_L^{(\ell)}$ using the above linear recurrence relation.
\begin{itemize}
    \item[\textbf{Step 1.}] \textbf{Initialization of the algorithm (independent of $\ell$).} For $i=1,\ldots,2^{r+1}$, compute $C_L^{(i)}$. Using standard algorithms already known in literature (see for example \cite{keller1985fast}, \cite{pernet2007faster},\cite{neiger2021deterministic}), each characteristic polynomial can be computed in at most $O((mi)^\omega\log(mi))$ operations over $\vF_q$, where it is $\omega\in(2,3]$. This means that such initialization totally cost at most $O(m^\omega(2^{r+1})^{\omega+1}\log(2md))$ operations over $\vF_q$.
    
    \item[\textbf{Step 2.}] \textbf{Compute the coefficients of the recursion (independent of $\ell$).} Compute coefficients $c_0,\ldots,c_{d-1}\in\vF_q[T]$ of the characteristic polynomial of the linear recurrence relation \eqref{eq:LinearEq}. It is sufficient to solve the following system of linear equations:
    \[\{C_L^{(i+d)} = c_{d-1} C_L^{(i+d-1)} + \ldots + c_1 C_L^{(i+1)} + c_0 C_L^{(i)}\}_{i=1,\ldots,d},\]
    for $c_0,\ldots,c_{d-1}$. This system can be solved using Gaussian elimination with a computational complexity of $O(d^3)$ operations over $\vF_q[T]$ (see for example \cite{nakos1997fraction-free}). 

    \item[\textbf{Step 3.}] \textbf{Compute linear recursion for evaluations of the sequence} In this step we compute evaluation points and their values to recover $C_L^{(\ell)}$ by interpolation at the next step.

    Case 1. Suppose $q>n$ so that we have enough space to choose $n$ evaluation points $x_1,\ldots,x_n$ in $\vF_q$. In the notation of Remark~\ref{rem:LinearRecurNthTerm}, we need to compute the evaluations $U_0|_{x_i}$ and $M^{n+1}|_{x_i}$ for every $i\in \{1,\ldots,n\}$. The computation of $U_0|_{x_i}$ requires at most a number of multiplication over $\vF_q$ equal to:
    \[
    \sum_{i=1}^{d}\deg C_L^{(i)} = \sum_{i=1}^{d} mi = \frac{1}{2}md(d+1).
    \]
    Instead, to compute the evaluation $M^{n+1}|_{x_i}$, it is sufficient to calculate $(M|_{x_i})^{n+1}$ because evaluation is a ring homomorphism. To compute $M|_{x_i}$ we need at most a number of multiplications over $\vF_q$ equal to:
    \[
    \sum_{i=0}^{d-1}\deg c_i \leq \sum_{i=0}^{d-1} B = Bd.
    \]
    Therefore, again by Remark~\ref{rem:LinearRecurNthTerm}, $(M|_{x_i})^{n+1}$ can be computed with
    \[
    Bd + d^3\log(n+1)
    \]
    multiplications over $\vF_q$. Moreover, $(M|_{x_i})^{n+1}\cdot U_0|_{x_i}$ costs $d^2$ multiplications over $\vF_q$. This means that, since we need to repeat the same procedure $n$ times, to compute all the evaluations costs at most $O((md^2+Bd)n + d^3 n\log(n))$ multiplications over $\vF_q$.

    Case 2. Suppose $q<n$. Then, in order to have enough evaluation points, we need to enlarge the base field. It is sufficient to choose points over $\vF_{q^{\lceil \log_q(n)\rceil}}$. Repeating the same argument of Case 1, since the multiplication of two elements over $\vF_{q^{\lceil \log_q(n)\rceil}}$ costs $\log_q^2(n)\approx\log^2(n)$ multiplications over $\vF_q$,  we have that to compute all the evaluations costs at most $O((md^2+Bd)n\log^2(n) + d^3 n\log^3(n))$ multiplications over $\vF_q$.

    \item[\textbf{Step 4.}]\textbf{Interpolation.} Construct $C_L^{(\ell)}$ by interpolation. 
    
    Case 1. Suppose $q>n$. Then interpolation can be done using $O(n\log^2(n))$ arithmetic operations over $\vF_q$ (see for example \cite[Theorem 8.14]{Aho1974Algorithm}).
    
    Case 2. If $q<n$, then interpolation must be performed over an extension field of $\vF_q$. The smallest such extension is $\vF_{q^{\lceil\log_q n\rceil}}$, where each multiplication corresponds to $\log_q^2 n\approx \log^2(n)$ multiplications in $\vF_q$. This means that such interpolation needs $O(n\log^4(n))$ operations over $\vF_q$.
\end{itemize}

Notice that our algorithm works well when $n\gg 0$ (namely $\ell\gg 0$) and $r$ is relatively small. Indeed, in this case, the final cost of the algorithm is dictated by Step 3 and 4, and it is asymptotically equivalent to $O(n\log^3(n))$ when $q>n$ and $O(n\log^4(n))$ when $q<n$. Using standard algorithms known in literature (see  \cite{duan2023faster, keller1985fast, neiger2021deterministic, pernet2007faster}), the same computation requires at least $O(M(n))$ operations to compute the characteristic polynomial directly, where $M(n)$ denotes the complexity of multiplying two $n\times n$ matrices. The best known bound for $M(n)$ is $O(n^{\omega})$ with $\omega\approx 2.371866$.

\section{Acknowledgements}
This work was supported by the National Science Foundation under Grant No 2338424.

\nocite{*} 
\bibliographystyle{abbrv}
\bibliography{ref.bib}

\end{document}